\title{On Borel Anosov subgroups of $\SL(d,\R)$}
\author{Subhadip Dey}
\address{Department of Mathematics, Yale University, New Haven, CT 06511}
\email{subhadip.dey@yale.edu}
\subjclass{22E40, 14M15, 20F65}
\keywords{Anosov representations, Flag manifolds}
\date{May 18, 2024}
\newcommand{\diagdots}[3][-25]{
  \rotatebox{#1}{\makebox[0pt]{\makebox[#2]{\xleaders\hbox{$\cdot$\hskip#3}\hfill\kern0pt}}}
}
\newtheorem{ntheorem}{Theorem}
\newtheorem{ncorollary}[ntheorem]{Corollary}
\newtheorem{theorem}{Theorem}[section]
\newtheorem{proposition}[theorem]{Proposition}
\newtheorem{lemma}[theorem]{Lemma}
\newtheorem*{claim}{Claim}
\newtheorem*{claimone}{Claim 1}
\newtheorem*{claimtwo}{Claim 2}
\theoremstyle{remark}
\newtheorem{remark}[theorem]{Remark}
\newtheorem{example}[theorem]{Example}
\theoremstyle{definition}
\newtheorem{definition}[theorem]{Definition}
\DeclareMathOperator{\PSL}{PSL}
\DeclareMathOperator{\SL}{SL}
\DeclareMathOperator{\SO}{SO}
\DeclareMathOperator{\Sp}{Sp}
\def\C{\mathcal{C}}
\def\FF{\mathbf{F}}
\def\N{\mathbf{N}}
\def\R{\mathbf{R}}
\def\Z{\mathbf{Z}}
\def\GG{\mathfrak{G}}
\def\F{\mathcal{F}}
\def\acts{\curvearrowright}
\def\E{\mathcal{E}}
\def\G{\Gamma}
\def\g{\gamma}
\def\s{\hat\sigma}
\begin{document}

\maketitle

\begin{center}
{\em Dedicated to Misha Kapovich on the occasion of his 60\textsuperscript{th} birthday
}
\end{center}

\begin{abstract}
 We study the antipodal subsets of the full flag manifolds $\F(\R^d)$.
 As a consequence, for natural numbers $d \ge 2$ such that $d\ne 5$ or $d \not\equiv 0,\pm1 \mod 8$, we show that Borel Anosov subgroups of $\SL(d,\R)$ are virtually isomorphic to either a free group or the fundamental group of a closed hyperbolic surface.
 This gives a partial answer to a question asked by Andr\'es Sambarino.
  Furthermore,
  we show restrictions on the hyperbolic spaces admitting uniformly regular quasi-isometric embeddings into the symmetric space $X_d$ of $\SL(d,\R)$.
\end{abstract}

\section{Introduction}

In the past decade, {Anosov subgroups} of higher rank Lie groups have emerged as a well-regarded higher rank extension of the classical {convex-cocompact Kleinian groups}.
The notion of Anosov representations was introduced  by Labourie \cite{MR2221137} from a dynamical perspective in his pioneering work on {Hitchin representations} of surface groups
and then extended by Guichard--Wienhard \cite{MR2981818} for any {hyperbolic groups}.
Afterward, Kapovich--Leeb--Porti \cite{MR3736790} gave several geometrical and dynamical characterizations of Anosov subgroups; see the article by Kapovich--Leeb \cite{MR3888689}, giving an overview of their characterizations.
A main feature of Anosov subgroups is that they have a well-defined limit set in suitable generalized flag varieties, and any two distinct points in these limit sets are in general position.

This paper is motivated by a question asked by Andr\'es Sambarino, namely, whether Borel Anosov subgroups of $\SL(d,\R)$ are necessarily virtually free or  surface groups.
Combined works of Canary--Tsouvalas \cite{MR4186136} and Tsouvalas \cite{MR4184577} have affirmatively answered this question for $d = 3,4$, and $d \equiv 2\, \mod 4$ (note that $d=2$ case is classical). 
Using a different approach, we give an affirmative answer to this question  for all $d\in\N$ satisfying 
\begin{equation}\label{eqn:d}
 d\ne 5
 \quad\text{and}\quad
d \equiv 2,3,4,5, \text{ or } 6 \mod 8.
\end{equation}
See \Cref{cor:BA} below.

\medskip
We summarize our main objectives in this paper:
 
\begin{enumerate}
 \item We study the subsets of full flag manifolds $\F (\R^d)$ whose all pairs points are {\em antipodal}, i.e., are in a general position.
 As noted above, the limit sets of Anosov subgroups share this property.
 We are specifically interested to understand when antipodal subsets of $\F (\R^d)$ are {\em maximally} antipodal.
 See \S\ref{subsec:anosov} for discussions related to this matter.

 \item We aim to understand which hyperbolic groups can be realized as {\em Borel Anosov subgroups} of $\SL(d,\R)$. 
 See \S\ref{subsec:Borel} for the discussion related to this.
 
 \item Finally, we aim to understand which geodesic metric spaces may admit {\em coarsely uniformly regular quasi-isometric embeddings}, a notion introduced by Kapovich--Leeb--Porti \cite{MR3888689} strengthening the classical notion of {\em quasi-isometric embeddings}, into the symmetric space $X_d$ of $\SL(d,\R)$.
 Notably, the {\em orbit maps} of Anosov subgroups are such embeddings.
 See \S\ref{subsec:URU} for further discussions.
\end{enumerate}

\subsection{Antipodal subsets}\label{subsec:anosov}

For $d\ge 2$, let $\F_d \coloneqq \F(\R^d)$ denote the manifold consisting of all complete flags in $\R^d$.
A pair of points $\sigma_\pm\in\F_d$ is called {\em antipodal} (or {\em transverse}) if
\[
 \sigma_-^{(k)} + \sigma_+^{(d-k)} = \R^d,\quad
 \forall k\in\{1,\dots,d-1\}.
\]
In the above, for $\sigma\in\F_d$, we use the notation $\sigma^{(k)}$ to denote the $k$-dimensional vector subspace of $\R^d$ appearing in the complete flag $\sigma$.
We denote by $\E_\sigma$, $\sigma\in\F_d$, the set of all points in $\F_d$ which are {\em not} antipodal to $\sigma$.
The complementary subset of $\E_\sigma$ in $\F_d$ which we denote by $\C_\sigma$ is an open dense subset of $\F_d$ homeomorphic to a cell.
The subset $\C_\sigma$ is called a {\em maximal Schubert cell} or {\em big cell}, whereas $\E_\sigma$ is the closure of the union of all co-dimension one Schubert cells in the Schubert cell decomposition of $\F_d$ corresponding to $\sigma$.

\begin{ntheorem}\label{thm:main}
Let $d$ be any natural number satisfying \eqref{eqn:d}.
Let $\sigma_\pm\in \F_d$ be any pair of {antipodal} points, and let $\Omega$ be any connected component of
$
 \F_d \setminus (\E_{\sigma_-} \cup \E_{\sigma_+}) = \C_{\sigma_-} \cap \C_{\sigma_+}$.
 If $c : [-1,1] \to \F_d$ is any continuous map such that
 \[
  c(\pm 1) = \sigma_\pm \quad \text{and} \quad
  c((-1,1)) \subset \Omega,
 \]
 then, for all $\sigma\in \Omega$, the image of $c$ intersects $\E_\sigma$.
\end{ntheorem}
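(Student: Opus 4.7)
\medskip
\noindent\textbf{Proof plan.}
The plan is to argue by contradiction using mod-$2$ intersection theory in the flag manifold. Suppose for some $\sigma \in \Omega$ the image of $c$ is disjoint from $\E_\sigma$. Since $\sigma_\pm$ are antipodal to $\sigma$ (hence $\sigma_\pm \in \C_\sigma$) and $c((-1,1)) \subset \Omega$, the path $c$ is then contained in the big Schubert cell $\C_\sigma$, which is homeomorphic to $\R^{d(d-1)/2}$ and in particular contractible. Using this contractibility, I would pick an auxiliary path $c'$ from $\sigma_+$ back to $\sigma_-$ inside $\C_\sigma$ and form the loop $\ell := c \cdot c'$, which by construction lies entirely in $\C_\sigma$.

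The codimension-one antipodal divisors
\[
 D_k(\sigma) \;=\; \{\tau \in \F_d : \tau^{(d-k)} \cap \sigma^{(k)} \ne 0\}, \qquad k = 1,\ldots,d-1,
\]
cover $\E_\sigma$, and because $\ell \subset \C_\sigma$ the mod-$2$ intersection pairings $\langle [\ell], [D_k(\sigma)] \rangle_{2}$ all vanish. The contradiction should come from showing that, regardless of the choice of $c'$, the class $[\ell] \in H_1(\F_d;\Z/2)$ is nonzero and in fact pairs non-trivially with some $[D_k(\sigma)]$ under Poincar\'e duality. Since $H_1(\C_\sigma;\Z/2)=0$ and $H_1(\F_d,\C_\sigma;\Z/2)\cong H_1(\F_d;\Z/2)$, the class $[\ell]$ depends only on the relative homotopy class of $c$ in $(\F_d,\C_\sigma)$, so it is in principle computable from the component $\Omega$.

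To evaluate $[\ell]$ I would use the Iwasawa identification $\F_d \cong \SO(d)/M$ with $M \cong (\Z/2)^{d-1}$, together with the unipotent parametrization $U \to \C_{\sigma_+}$; in these coordinates the components of $\C_{\sigma_-} \cap \C_{\sigma_+}$ are labelled by the sign vector $(\epsilon_1,\ldots,\epsilon_{d-1}) \in \{\pm 1\}^{d-1}$ of the $d-1$ upper-right minors, and the component $\Omega$ corresponds to one such vector. A suitable path from $\sigma_-$ into $\Omega$ and back to $\sigma_-$ through $\C_\sigma$ then lifts to a path in $\SO(d)$ whose endpoints differ by a prescribed element of $M$, and this element controls $[\ell]$. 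Pairing with $[D_k(\sigma)]$ reduces to a computation in the mod-$2$ cohomology of $\SO(d)/M$, equivalently in the Stiefel--Whitney classes $w_1(L_k)$ of the tautological line bundles $L_k = \sigma^{(k)}/\sigma^{(k-1)}$ over $\F_d$.

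The hardest step is this final cohomological computation and its dependence on $d$. The arithmetic condition $d \ne 5$ and $d \equiv 2,3,4,5,6 \pmod 8$ should appear precisely as the range in which the relevant monomial in the $w_1(L_k)$ is nonzero in $H^*(\F_d;\Z/2)$: the excluded residues $0,\pm 1 \pmod 8$ mirror the $8$-periodicity of mod-$2$ invariants of the orthogonal group (Bott periodicity in $\mathrm{KO}$-theory), while $d=5$ is a sporadic exception tied to the low-dimensional isomorphism $\SO(5)\cong \Sp(4)/\{\pm 1\}$. Matching the vanishing/non-vanishing pattern of this mod-$2$ pairing with \eqref{eqn:d} exactly is the technical heart of the argument, and where I expect almost all of the work to be concentrated.
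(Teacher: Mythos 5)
There is a genuine gap, and it sits exactly where you place "almost all of the work." Under your contradiction hypothesis the loop $\ell=c\cdot c'$ lies in the contractible cell $\C_\sigma$, so $[\ell]=0$ in $H_1(\F_d;\Z/2)$ automatically; to reach a contradiction you must therefore evaluate $[\ell]$ (or its pairings with the divisors $D_k(\sigma)$) using only the other piece of information, namely that the interior of $c$ lies in $\Omega$. But that data does not determine the class you want to compute: two paths from $\sigma_-$ to $\sigma_+$ with interior in $\Omega$ need not be homotopic rel endpoints, and you have not shown that loops in $\Omega$ (or in $\Omega\cup\{\sigma_\pm\}$) die in $H_1(\F_d;\Z/2)$, which is what your claim that $[\ell]$ is "computable from the component $\Omega$" silently requires. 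So the nonvanishing statement at the heart of your plan is neither proved nor even well posed as stated, and the argument as written proves nothing beyond the (true but useless) vanishing of the pairings.

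More seriously, an invariant living in $H_1(\F_d;\Z/2)$ and pairings with the classes $w_1(L_k)$ is too coarse to detect the arithmetic in \eqref{eqn:d}: the mod-$2$ cohomology of $\F_d$ has no mod-$8$ periodicity, and your appeal to Bott periodicity and to $\SO(5)$ is not where these conditions come from. In the paper the restriction on $d$ arises at the level of $\pi_0$ of $\C_{\sigma_-}\cap\C_{\sigma_+}$: the key input is \Cref{thm:swaps}, that the involution $u\mapsto u^{-1}$ of $U_d$ preserves no connected component of $U_d\setminus\E_{\sigma_+}$, proved via the Shapiro--Shapiro--Vainshtein identification of components with $\GG_{d-1}$-orbits on upper-triangular $\FF_2$-matrices, where the residues mod $8$ enter through slice heights and the quadratic form separating the two non-singleton orbits; the case $d=5$ is then a separate computational check. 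Indeed that lemma genuinely fails for $d\equiv\pm1\pmod 8$, a phenomenon invisible to the cohomology of $\F_d$, so any successful version of your approach would have to re-encode this component-level combinatorics rather than a characteristic-class computation. (The paper then finishes not by intersection theory but by a short connectedness argument: writing $\sigma=u_\sigma\sigma_+$, points of $\Omega$ near $\sigma_-$ lie in $u_\sigma\Omega$ while $\sigma_+\notin u_\sigma\bar\Omega$, so $c$ must cross $\partial(u_\sigma\Omega)\subset\E_{\sigma_-}\cup\E_\sigma$ and hence $\E_\sigma$.)
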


Although the following example is not covered in the setting of the theorem, we believe that it would still serve as a simple illustration of the statement:
In the case corresponding to $\SL(2,\R) \times \SL(2,\R)$ and its minimal parabolic subgroup, the ``full flag manifold'' is realized as a torus.
Let $D$ denote the unit square in $\R^2$ from which we obtained the torus by identifying the opposite edges.
We identify the four corners of $D$ with $\sigma_-$, and $\E_{\sigma_-}$ with its edges.
Given any point $\hat\sigma$ in the interior of $D$, the subset $\E_{\hat\sigma}$ can be realized as the union of the horizontal and vertical line segments passing through $\hat\sigma$.
Therefore, for any $\sigma_+ \in \C_{\sigma_-} = {\rm int}\, D$, the intersection $\C_{\sigma_-} \cap \C_{\sigma_+}$ can be seen as the disjoint union of four open rectangles. For any path $c$ connecting $\sigma_\pm$ lying in (except for the endpoints) one such rectangles $\Omega$, and for any point $\sigma \in \Omega$, it can be checked that $\E_\sigma$ intersects $c$.

We prove \Cref{thm:main} in \S\ref{sec:thm:main}.
The main technical ingredient in the proof of this result is \Cref{thm:swaps}, which states that, for natural numbers $d$ satisfying \eqref{eqn:d}, an involution $\iota$ defined on $\C_{\sigma_-} \cap \C_{\sigma_+}$
does not leave invariant any connected components.
See \S\ref{sec:heisen}.

\begin{remark}\label{rem:resd}
Let us highlight the reason why \Cref{thm:main} and the other key results below have the restriction on $d$ given by \eqref{eqn:d}: This is due to the fact that \Cref{thm:swaps} {possibly} fails when $d$ is of the form $8k-1$, $8k$, or $8k+1$, for $k\in\N$. Specifically, we show that \Cref{thm:swaps} indeed fails when $d = 8k\pm1$, yet the status of its validity remains unclear for $d = 8k$.
However, with help from Su Ji Hong, we could computationally verify the validity of \Cref{thm:swaps} when $d=5$.
Further discussions on this matter are detailed in \Cref{rem:swaps}.
Consequently, for $d=5$, Theorems \ref{thm:main}, \ref{thm:main0}, \ref{thm:main1}, and Corollaries \ref{cor:BA}, \ref{cor:URQI} (as well as \Cref{cor:BAothersplit} for $n=2$ in (i)) remain valid.

Nevertheless, it is an intriguing prospect to investigate whether \Cref{thm:main} holds true for these remaining natural numbers $d$, provided the hypothesis is strengthened by requiring the map $c : [-1,1] \to \F_d$ to also be antipodal.
\end{remark}

We apply \Cref{thm:main} to get information about {\em (locally) maximally antipodal subsets} of $\F_d$, defined as follows.

\begin{definition}[Antipodal subsets and maps]\leavevmode
\begin{enumerate}[label=(\roman*)]
 \item  A subset $\Lambda\subset \F_d$ is called {\em antipodal} if all distinct pairs of points in $\Lambda$ are antipodal.
 
 \item
An antipodal subset $\Lambda\subset \F_d$ is called {\em maximally} antipodal if it is not contained in a strictly larger antipodal subset of $\F_d$.

 \item
 We call an antipodal subset $\Lambda\subset \F_d$ {\em locally} maximally antipodal if there exists an open neighborhood $N$ of $\Lambda$ in $\F_d$ such that $\Lambda$ is not contained in any strictly larger antipodal subset of $N$; equivalently, every point of $N$ is {\em not} antipodal to some point of $\Lambda$.
 \item
A continuous map $\phi: Z \to\F_d$ is called {\em antipodal} if for all distinct points $z_\pm\in Z$, $\phi(z_+)$ and $\phi(z_-)$ is a pair of antipodal points.
\end{enumerate}
\end{definition}

Note that antipodal subsets of $\F_d$ form a {poset}, partially ordered by inclusions, and the maximally antipodal subsets are precisely the maximal elements.

As an application of \Cref{thm:main}, we get the following result.

\begin{ncorollary}\label{cor:maximal}
Let $d$ be any natural number satisfying \eqref{eqn:d}.
 If $c: S^1\to \F_d$ is an antipodal embedding,
 then $\Lambda \coloneqq c(S^1)$ is a locally maximally antipodal subset of $\F_d$.
\end{ncorollary}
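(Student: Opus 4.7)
The plan is to argue by contradiction and reduce the statement to a single application of \Cref{thm:main}. Suppose $\Lambda = c(S^1)$ is \emph{not} locally maximally antipodal. Applying this assumption to the shrinking neighborhoods $N_n := \{x \in \F_d : d(x, \Lambda) < 1/n\}$ would yield, for each $n$, a point $\sigma_n \in N_n$ that is antipodal to every point of $\Lambda$. In particular $\sigma_n \notin \Lambda$ (a point is never antipodal to itself), and by compactness of $\Lambda$, after passing to a subsequence I may assume $\sigma_n \to \sigma_0 = c(t_0)$ for some $t_0 \in S^1$.

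Next, I would pick parameters $t_-, t_+ \in S^1$ close to, but distinct from, $t_0$, lying on opposite sides of $t_0$ within a small arc of $S^1$, and set $\sigma_\pm := c(t_\pm)$. Since $c$ is antipodal, $\sigma_-$ and $\sigma_+$ form an antipodal pair. Reparametrize the closed sub-arc through $t_0$ from $t_-$ to $t_+$ as a continuous map $\gamma : [-1,1] \to \F_d$ with $\gamma(\pm 1) = \sigma_\pm$ and with $\sigma_0$ in the image of $\gamma|_{(-1,1)}$. For each $t \in (-1,1)$ the point $\gamma(t) \in \Lambda$ is distinct from $\sigma_\pm$ and hence antipodal to both, so $\gamma((-1,1)) \subset \C_{\sigma_-} \cap \C_{\sigma_+}$. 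Since this image is connected, it lies in a single connected component $\Omega$, and in particular $\sigma_0 \in \Omega$.

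Finally, $\Omega$ is open (being a connected component of the open set $\C_{\sigma_-} \cap \C_{\sigma_+}$), so $\sigma_n \to \sigma_0 \in \Omega$ forces $\sigma_n \in \Omega$ for all sufficiently large $n$. Applying \Cref{thm:main} to $\gamma$ with $\sigma = \sigma_n$ would then give a point $\lambda_n$ in the image of $\gamma$ (hence $\lambda_n \in \Lambda$) with $\lambda_n \in \E_{\sigma_n}$. Since $\sigma_n \notin \Lambda$ we have $\lambda_n \ne \sigma_n$, so $\lambda_n$ and $\sigma_n$ form a pair of distinct non-antipodal points, contradicting the choice of $\sigma_n$.

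The only delicate point in this plan is ensuring that $\sigma_n$ eventually enters the \emph{same} connected component $\Omega$ of $\C_{\sigma_-} \cap \C_{\sigma_+}$ as the interior of the sub-arc; this is arranged by placing $\sigma_0$ on that sub-arc so that $\sigma_0 \in \Omega$, and then invoking the openness of $\Omega$. Beyond this, no new ingredients are required: \Cref{thm:main} and the compactness of $\Lambda$ suffice.
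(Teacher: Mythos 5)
Your argument is correct, and every step you flag as delicate does go through: $\gamma((-1,1))$ is connected and lies in $\C_{\sigma_-}\cap\C_{\sigma_+}$, hence in a single component $\Omega$ containing $\sigma_0$, components of open subsets of the manifold $\F_d$ are open, so $\sigma_n\in\Omega$ eventually, and \Cref{thm:main} then produces a point of $\Lambda$ non-antipodal to $\sigma_n$, contradicting the choice of $\sigma_n$ coming from the negation of local maximal antipodality. The paper proceeds differently in packaging, though with the same key ingredient: it argues directly rather than by contradiction, fixing a triple $x_1,x_2,x_3\in S^1$, letting $\Omega_{ijk}$ be the component of $\F_d\setminus(\E_{c(x_i)}\cup\E_{c(x_k)})$ containing $c(x_j)$, noting that $Y=\Omega_{123}\cup\Omega_{231}\cup\Omega_{312}$ is an open neighborhood of $\Lambda$ (each open arc between two marked points maps into the corresponding component), and applying \Cref{thm:main} once per arc to see that every point of $Y$ is non-antipodal to some point of $\Lambda$. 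The paper's route explicitly exhibits the neighborhood $N$ required by the definition, with no limiting process; your route trades that constructiveness for a compactness/sequence argument and needs \Cref{thm:main} only for a single short arc near the accumulation point $\sigma_0$, at the cost of being indirect. Both proofs ultimately rest on the identical use of \Cref{thm:main}: a point lying in the component of $\C_{\sigma_-}\cap\C_{\sigma_+}$ that contains the interior of an arc of $\Lambda$ joining $\sigma_-$ to $\sigma_+$ must fail to be antipodal to some point of that arc.
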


\begin{proof}
 Let $x_1,x_2,x_3 \in S^1$ be any distinct triple.
 For distinct indices $i,j,k\in \{1,2,3\}$, let $\Omega_{ijk}$ denote the connected component of $\F_d \setminus (\E_{c(x_i)} \cup \E_{c(x_k)})$ containing $c(x_j)$.
 Then, $Y = \Omega_{123}\cup \Omega_{231} \cup \Omega_{312}$ is an open neighborhood of $\Lambda$.
 Applying \Cref{thm:main}, one can verify that every point in $Y$ is non-antipodal to some point in $\Lambda$.
\end{proof}

It is unclear whether one can omit the word ``locally'' in the conclusion of the above result.
However, if the image of $c: S^1\to \F_d$ is the limit set of a Borel Anosov subgroup of $\SL(d,\R)$, then 
$\Lambda \coloneqq c(S^1)$ is a maximally antipodal subset of $\F_d$.
See \Cref{prop:remone}.

\subsection{Borel Anosov subgroups}\label{subsec:Borel}

First, let us recall the notion of {\em boundary embedded subgroups} of $\SL(d,\R)$ introduced by Kapovich, Leeb, and Porti \cite{MR3736790}. 

\begin{definition}[Boundary embedded subgroups]\label{def:BE}
A subgroup $\G$ of $\SL(d,\R)$ is called {\em $B$-boundary embedded} if $\G$, as an abstract group, is hyperbolic and there exists a $\G$-equivariant antipodal embedding $\xi:\partial_\infty\G \to \F_d$ of the Gromov boundary $\partial_\infty\G$ of $\G$ to the complete flag manifold $\F_d$.
\end{definition}

Due to the fact that non-elementary hyperbolic groups act as convergence groups on their Gromov boundaries, it can be inferred that non-elementary $B$-boundary embedded subgroups of $\SL(d,\R)$ are discrete. 
The following result shows that the group theoretic structures of the $B$-boundary embedded subgroups of $\SL(d,\R)$ are highly restricted.

\begin{ntheorem}\label{thm:main0}
Let $d$ be any natural number satisfying \eqref{eqn:d}.
If a subgroup $\G$ of $\SL(d,\R)$ is $B$-boundary embedded, then $\G$ is virtually isomorphic to either a free group or the fundamental group of a closed hyperbolic surface.
\end{ntheorem}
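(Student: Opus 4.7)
The plan is to prove the following topological dichotomy: $\partial_\infty\G$ is either totally disconnected or homeomorphic to $S^1$. Once the dichotomy is in hand, the conclusion follows from classical structural theorems for hyperbolic groups: a non-elementary hyperbolic group whose boundary is totally disconnected is virtually free, by Stallings' ends theorem combined with Dunwoody's accessibility, and a hyperbolic group whose boundary is homeomorphic to $S^1$ is virtually isomorphic to the fundamental group of a closed hyperbolic surface, by the convergence-group theorem of Tukia, Casson--Jungreis, and Gabai.

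Elementary hyperbolic groups are virtually cyclic and hence virtually free, so I may assume $\G$ is non-elementary; then $\partial_\infty\G$ is a perfect compact metric space and $\G$ acts on it minimally and as a convergence group. Using Stallings--Dunwoody to decompose $\G$ over finite subgroups and passing to a one-ended vertex group (whose boundary embeds as the corresponding connected component of $\partial_\infty\G$, and for which the antipodality of $\xi$ is inherited), I reduce to the case where $\partial_\infty\G$ is a connected, locally connected Peano continuum, by Bestvina--Mess and Bowditch--Swarup.

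The heart of the argument is to show that, in the one-ended case, the equivariant antipodal embedding $\xi:\partial_\infty\G\to\F_d$ forces $\partial_\infty\G$ to be a topological circle. Given any two distinct points $a,c\in\partial_\infty\G$ and any path $\gamma:[-1,1]\to\partial_\infty\G$ joining $a$ to $c$ with interior in $\partial_\infty\G\setminus\{a,c\}$, the antipodality of $\xi$ places the interior of $\xi\circ\gamma$ inside a single connected component $\Omega$ of $\C_{\xi(a)}\cap\C_{\xi(c)}$. By \Cref{thm:main}, the image of $\xi\circ\gamma$ then meets $\E_\sigma$ for every $\sigma\in\Omega$; combined with the antipodality of $\xi$, this forces $\gamma$ to pass through every point $b\in\partial_\infty\G$ with $\xi(b)\in\Omega$. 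In particular, $\xi$ induces a coherent separation structure on $\partial_\infty\G$: the path-components of $\partial_\infty\G\setminus\{a,c\}$ are faithfully classified by the $\Omega$-components that their $\xi$-images populate, and two points whose $\xi$-images land in the same $\Omega$ lie on a common path-component joining $a$ to $c$. Combined with \Cref{cor:maximal}, the minimality of the $\G$-action, and the cocompactness of $\G$ on distinct triples, this rigid structure is to be parlayed into the topological identification $\partial_\infty\G\cong S^1$.

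The main obstacle I anticipate is precisely this final topological identification. While the separation statement from \Cref{thm:main} strongly suggests a circular order on $\partial_\infty\G$, one must still exclude the possibility that $\partial_\infty\G$ is a higher-dimensional Peano continuum such as a Sierpi\'nski carpet or a Menger curve, and verify that removing any two points produces exactly two pieces rather than some larger number. This step should require careful use of local connectedness, the convergence dynamics of $\G$, and the local maximality provided by \Cref{cor:maximal}.
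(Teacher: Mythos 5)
Your proposal does not complete the proof at its decisive point, and it also omits a step that the theorem genuinely needs. The heart of your plan is that, for a one-ended factor, the antipodal boundary map forces $\partial_\infty\G\cong S^1$; but the argument you sketch (the separation property extracted from \Cref{thm:main}, namely that an arc in $\partial_\infty\G$ from $a$ to $c$ must pass through every $b$ with $\xi(b)$ in the relevant component $\Omega$) is only the starting point, and the passage from it to a genuine identification of the boundary with a circle --- excluding Menger- or Sierpi\'nski-type Peano continua, counting complementary components of two-point sets, producing a circular order --- is exactly what you flag as the ``main obstacle'' and leave unproved. The paper avoids this entirely and in a much shorter way: by Bonk--Kleiner (or Swarup's no-cut-point theorem plus local connectedness) a one-ended hyperbolic group boundary contains \emph{some} embedded circle $i:S^1\to\partial_\infty\G_1$; composing with $\xi_1$ gives an antipodal circle in $\F_d$, which is \emph{locally maximally antipodal} by \Cref{cor:maximal}; if $i(S^1)\subsetneq\partial_\infty\G_1$, connectedness of $\partial_\infty\G_1$ yields points $y_n\notin i(S^1)$ with $\xi_1(y_n)$ converging to a point of $\xi_1(i(S^1))$ while remaining antipodal to all of it --- a contradiction. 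Hence $\partial_\infty\G_1=i(S^1)$ and Tukia--Casson--Jungreis--Gabai applies. No circular order on the boundary is ever constructed, so the topological difficulties you anticipate simply do not arise.

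Separately, your reduction does not deliver the stated conclusion. Showing that each one-ended vertex group of the Stallings--Dunwoody decomposition is a surface group only gives that $\G$ is virtually a free product of a free group and surface groups; a group such as $\pi_1(\Sigma)\star\Z$ is neither virtually free nor virtually a surface group, and its boundary is neither totally disconnected nor a circle, so your claimed dichotomy is not a consequence of the one-ended analysis alone. The paper closes this gap with a second use of antipodality: writing $\G=F_k\star\G_1\star\dots\star\G_n$ and assuming $\G\neq\G_1$, pick $z\in\partial_\infty\G\setminus\partial_\infty\G_1$ and a nontrivial $\g_1\in\G_1$; then $\xi(\g_1^n z)$ accumulates on $\xi(\partial_\infty\G_1)\cong\xi_1(S^1)$ while staying antipodal to it, again contradicting \Cref{cor:maximal}. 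You need this step (or an equivalent argument ruling out mixed free products) in addition to the one-ended case.
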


This result, which we prove in \S\ref{proofs:mainresults}, directly applies to the class of Borel Anosov subgroups introduced by Labourie \cite{MR2221137}. Labourie proved the seminal result that the images of the \textit{Hitchin representations} of surface groups into $\SL(d,\R)$ are Borel Anosov subgroups. While Labourie's original definition of Borel Anosov subgroups was intricate, a more straightforward definition has since emerged thanks to the work of Kapovich–Leeb–Porti \cite{MR3736790,MR3890767} and Bochi--Potrie--Sambarino \cite{bochi2019anosov}.

For $g\in \SL(d,\R)$, let
\begin{equation*}
\sigma_1(g) \ge \dots \ge \sigma_d(g),
\end{equation*}
denote the singular values of $g$.
 For a finitely generated group $\G$, let $|\cdot|:\G \to \N \cup\{0\}$ denote the word-length function with respect to some symmetric finite generating set of $\G$.
The following definition does not depend on the choice of such a generating set, although the implied constants may vary.

\begin{definition}[Borel Anosov subgroups]\label{def:results_anosov}
 A finitely generated subgroup $\G$ of $\SL(d,\R)$ is called {\em Borel Anosov} if there exist constants $L\ge 1$ and $A\ge 0$ such that, for all $k\in \{1,\dots,d-1\}$ and for all $\g\in\G$,
\begin{equation}\label{eqn:results_anosov}
  \log \left(\frac{\sigma_{k}(\g)}{\sigma_{k+1}(\g)}\right) \ge L^{-1}|\g| - A.
\end{equation}
\end{definition}

The main features of the Borel Anosov subgroups $\G$ of $\SL(d,\R)$ includes (i) $\G$, as an abstract group, is hyperbolic and (ii) there exists a $\G$-equivariant antipodal embedding, called the {\em limit map},
\[
 \xi : \partial_\infty \G \to \F_d,
\]
from the Gromov boundary $\partial_\infty \G$ of $\G$ to the complete flag manifold $\F_d$.
See \cite{MR2221137,bochi2019anosov}.
In particular, {Borel Anosov subgroups} of $\SL(d,\R)$ are $B$-boundary embedded (\Cref{def:BE}).
Therefore, \Cref{thm:main0} has the following direct implication.

\begin{ncorollary}\label{cor:BA}
Let $d$ be any natural number satisfying \eqref{eqn:d}.
If $\G$ is a Borel Anosov subgroup of $\SL(d,\R)$, then $\G$ is virtually isomorphic to either a free group or the fundamental group of a closed hyperbolic surface.
\end{ncorollary}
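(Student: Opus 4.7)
The plan is to derive Corollary \ref{cor:BA} as an immediate consequence of Theorem \ref{thm:main0}; the work reduces to verifying that Borel Anosov subgroups automatically satisfy the hypothesis of $B$-boundary embeddedness in the sense of Definition \ref{def:BE}.

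First I would recall the two structural features of a Borel Anosov subgroup $\G$ of $\SL(d,\R)$ highlighted in the paragraph preceding the corollary: (i) as an abstract group, $\G$ is Gromov hyperbolic, and (ii) there exists a $\G$-equivariant continuous antipodal embedding $\xi : \partial_\infty \G \to \F_d$, the so-called limit map. Both of these facts are established under the uniform singular-value gap condition \eqref{eqn:results_anosov} in the works of Labourie \cite{MR2221137} and Bochi--Potrie--Sambarino \cite{bochi2019anosov}, so I would simply cite them rather than reprove them. In particular, the antipodality of $\xi$ (i.e., that distinct boundary points are sent to transverse flags) is the part of the Anosov characterization that ties directly to the definition of transversality used throughout this paper.

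Next, I would observe that properties (i) and (ii) are literally the two clauses of Definition \ref{def:BE}: $\G$ is hyperbolic, and there is a $\G$-equivariant antipodal embedding $\partial_\infty \G \to \F_d$. Hence every Borel Anosov subgroup of $\SL(d,\R)$ is $B$-boundary embedded. Applying Theorem \ref{thm:main0}, which is valid for $d$ satisfying \eqref{eqn:d}, I conclude that $\G$ is virtually isomorphic to either a free group or the fundamental group of a closed hyperbolic surface, which is exactly the content of Corollary \ref{cor:BA}.

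There is no genuine obstacle at this stage; the entire difficulty has already been concentrated in Theorem \ref{thm:main0} and, transitively, in the topological input Theorem \ref{thm:main} and the non-invariance of components for the involution $\iota$ established in Theorem \ref{thm:swaps}. The only care needed is to make sure one quotes an explicitly antipodal limit map, which is standard in all references cited here.
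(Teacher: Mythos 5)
Your proposal is correct and matches the paper's own argument: the paper likewise notes that hyperbolicity and the $\G$-equivariant antipodal limit map $\xi:\partial_\infty\G\to\F_d$ (citing Labourie and Bochi--Potrie--Sambarino) show that Borel Anosov subgroups are $B$-boundary embedded in the sense of \Cref{def:BE}, and then deduces \Cref{cor:BA} directly from \Cref{thm:main0}. No gaps; the heavy lifting is indeed entirely in \Cref{thm:main0} and its inputs.
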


\begin{remark}
This result partially answers a question asked by Andr\'es Sambarino (see \cite[\S 7]{MR4186136}) who asked if the statement is true for all $d\ge 2$. 
As mentioned above, this question previously has been affirmatively answered for $d = 3$ and $d=4$ by Canary--Tsouvalas \cite{MR4186136}, and for all $d$ of the form $4k+2$ by Tsouvalas \cite{MR4184577}.
In fact, in this article, we give a new (and possibly simpler) proof for the previously known cases from \cite{MR4186136,MR4184577}. 
However, for the remaining integers $d\ge 2$ not covered by \Cref{cor:BA} (except for $d=5$), we are unable to provide a conclusive answer to this question. Cf. \Cref{rem:resd}. 
We emphasize a connection between the maximal antipodality of limit sets and Andr\'es Sambarino's question, which could be beneficial for further exploration in these remaining cases:
Suppose there exists $d\in\N$ and a Borel Anosov subgroup $\G< \SL(d,\R)$, isomorphic to a surface group, such that the limit set of $\G$ in $\F_d$ is not maximally antipodal. In this case, by applying the  Combination Theorem for Anosov subgroups \cite{MR4002289} (see also \cite{Dey:2022uz}), one can construct a Borel Anosov subgroup of $\SL(d,\R)$ isomorphic to $\G' \star \Z$, where $\G'$ is a finite index subgroup (thus, a surface subgroup) of $\G$. Such a construction could produce a counter-example.
\end{remark}

More generally, given a connected, non-compact, real semisimple Lie group $G$ with finite center and a parabolic subgroup $P$ of $G$, there is a distinguished class of discrete subgroups of $G$ called {\em $P$-Anosov subgroups} \cite{MR2981818} (see also \cite{MR3736790}).
By a {\em $B$-Anosov subgroup} of $G$, we are referring to a $P$-Anosov subgroup, where $P$ is assumed to be a minimal parabolic subgroup of $G$.\footnote{When dealing with a connected algebraic group $G$ defined over an algebraically closed field, the minimal parabolic subgroups are Borel subgroups. This is why we refer this class of subgroups as ``$B$-Anosov.'' For the same reason, we referred  to the class of subgroups defined in \Cref{def:BE} as ``$B$-boundary embedded.'' When the minimal parabolic subgroups of $G$ are Borel (e.g., if $G$ is split),  we may also refer $B$-Anosov subgroups as {\em Borel Anosov subgroups} as done in \Cref{def:results_anosov} for the case $G= \SL(d,\R)$.} 

In the special case $G = \SO_0(n,n+1)$ (resp. $G = \Sp(2n,\R)$),  the Borel Anosov subgroups of $G$ map to Borel Anosov subgroups of $\SL(2n+1,\R)$ (resp. $\SL(2n,\R)$) under the inclusion $\SO_0(n,n+1) \hookrightarrow \SL(2n+1,\R)$ (resp. $\Sp(2n,\R) \hookrightarrow \SL(2n,\R)$).
Thus, \Cref{cor:BA} also yields the following:

\begin{ncorollary}\label{cor:BAothersplit}
 Let $G$ be one of following:
\begin{enumerate}[label=(\roman*)]
 \item $\SO_0(n,n+1)$, where $n\ne 2$ and $n \equiv 1$ or $2\mod 4$.
 \item $\Sp(2n,\R)$, where $n \equiv 1,2$ or $3\mod 4$.
\end{enumerate}
 If $\G$ is a Borel Anosov subgroup of $G$, then $\G$ is virtually isomorphic to either a free group or the fundamental group of a closed hyperbolic surface.
\end{ncorollary}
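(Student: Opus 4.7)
The plan is to reduce both cases to \Cref{cor:BA} via the standard faithful matrix representations
\[
 \SO_0(n,n+1) \hookrightarrow \SL(2n+1,\R)
 \quad \text{and} \quad
 \Sp(2n,\R) \hookrightarrow \SL(2n,\R).
\]
The paragraph immediately preceding the corollary records the crucial fact that these inclusions send Borel Anosov subgroups to Borel Anosov subgroups of the enveloping special linear group; since each inclusion is a faithful homomorphism, the abstract isomorphism type of $\G$ is preserved under the composition. Consequently, the entire argument reduces to checking that the target dimension $d$ (equal to $2n+1$ in case (i) and $2n$ in case (ii)) satisfies the congruence condition \eqref{eqn:d}.

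In case (i), $d=2n+1$ is odd, so the constraint $d\ne 5$ translates to $n\ne 2$, which is part of the hypothesis. The residues $n\equiv 1\pmod 4$ and $n\equiv 2\pmod 4$ yield $d\equiv 3\pmod 8$ and $d\equiv 5\pmod 8$ respectively, both of which lie in the admissible set $\{2,3,4,5,6\}\pmod 8$. In case (ii), $d=2n$ is even, so $d\ne 5$ is automatic, and the residues $n\equiv 1,2,3\pmod 4$ produce $d\equiv 2,4,6\pmod 8$ respectively, again all admissible. Applying \Cref{cor:BA} to the image of $\G$ in $\SL(d,\R)$ then delivers the desired structural conclusion.

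There is no substantive obstacle in this argument: it is purely a bookkeeping exercise matching the hypotheses on $n$ against the arithmetic condition \eqref{eqn:d}, once the preservation of the Borel Anosov property under the two inclusions is granted. The latter ultimately stems from the fact that a minimal parabolic of $\SO_0(n,n+1)$ (respectively $\Sp(2n,\R)$) stabilizes a maximal isotropic flag that, together with its orthogonal (respectively symplectic) complement, canonically determines a full flag in $\R^d$; so the singular-value gaps controlling the Anosov condition \eqref{eqn:results_anosov} for $\G$ inside $G$ are inherited by the image inside $\SL(d,\R)$. This fact is cited from the paragraph preceding the corollary, so no fresh technical work is required.
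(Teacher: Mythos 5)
Your proposal is correct and follows exactly the paper's route: the paper derives \Cref{cor:BAothersplit} from \Cref{cor:BA} via the inclusions $\SO_0(n,n+1)\hookrightarrow\SL(2n+1,\R)$ and $\Sp(2n,\R)\hookrightarrow\SL(2n,\R)$, which carry Borel Anosov subgroups to Borel Anosov subgroups, and your congruence bookkeeping ($d\equiv 3,5\bmod 8$ with $d\ne 5$ in case (i), $d\equiv 2,4,6\bmod 8$ in case (ii)) matches the condition \eqref{eqn:d} precisely.
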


Finally, it is worth remarking that restrictions on Anosov subgroups of the symplectic groups are explored further in the subsequent papers \cite{dey2023restrictions} and \cite{pozzetti2023projective}.
 
\subsection{Uniformly regular quasi-isometric embeddings}\label{subsec:URU}

The notion of {\em uniformly regular quasi-isometric embeddings}, introduced by Kapovich--Leeb--Porti, of geodesic metric spaces into the symmetric space 
\[X_d \coloneqq \SL(d,\R)/\SO(d,\R)\]
is a strengthening of {quasi-isometric embeddings}.
Since the definition of uniformly regular quasi-isometric embeddings requires a lengthier discussion, we refer our reader to \cite[Definition 2.26]{MR3888689}.
This notion is especially interesting in the context of Anosov subgroups since, by \cite[Theorem 3.41]{MR3888689}, a subgroup $\G<\SL(d,\R)$ is Borel Anosov if and only if $\G$ is finitely-generated and the orbit map 
\begin{equation}\label{orbitmap}
  \G \to X_d, \quad \g \mapsto \g\cdot x_0,
\end{equation}
is a uniformly regular quasi-isometric embedding, where $\G$ is equipped with any word metric and $x_0 \in X_d$ is any base-point (cf. \eqref{eqn:results_anosov}).

\begin{ntheorem}\label{thm:main1}
Consider any locally compact, geodesic, Gromov hyperbolic space $Z$ with $\partial_\infty Z$ denoting its Gromov boundary. Suppose there exists a topological embedding $c: S^1\to \partial_\infty Z$ such that the image of $c$ is not an open set.
Then $Z$ does not admit any uniformly regular quasi-isometric embeddings into $X_d$, given $d$ satisfies \eqref{eqn:d}.
\end{ntheorem}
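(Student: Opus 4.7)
My plan is to argue by contradiction, leveraging the boundary extension of uniformly regular quasi-isometric embeddings together with \Cref{cor:maximal}. Assume that there is a uniformly regular quasi-isometric embedding $f : Z \to X_d$. By the framework of Kapovich--Leeb--Porti, such an embedding induces a continuous antipodal topological embedding
\[
 \bar f : \partial_\infty Z \to \F_d,
\]
i.e., $\bar f$ sends distinct points of $\partial_\infty Z$ to antipodal pairs in $\F_d$. In particular, $\bar f \circ c : S^1 \to \F_d$ is an antipodal embedding, so by \Cref{cor:maximal} the image $\Lambda \coloneqq \bar f(c(S^1))$ is a \emph{locally} maximally antipodal subset: there exists an open neighborhood $N \subset \F_d$ of $\Lambda$ such that every point of $N$ fails to be antipodal to some point of $\Lambda$.

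Now I exploit the non-openness hypothesis on $c(S^1)$ to produce a contradiction. Since $c(S^1) \subset \partial_\infty Z$ is not open, I can pick a sequence $\xi_n \in \partial_\infty Z \setminus c(S^1)$ converging to some $\xi_\infty \in c(S^1)$. By continuity, $\bar f(\xi_n) \to \bar f(\xi_\infty) \in \Lambda$, and hence $\bar f(\xi_n) \in N$ for all sufficiently large $n$. Local maximal antipodality of $\Lambda$ then supplies, for each such $n$, a point $\zeta_n \in c(S^1)$ with $\bar f(\xi_n)$ \emph{not} antipodal to $\bar f(\zeta_n)$. On the other hand, $\xi_n \notin c(S^1)$ forces $\xi_n \ne \zeta_n$, so by the global antipodality of $\bar f$ the points $\bar f(\xi_n)$ and $\bar f(\zeta_n)$ \emph{are} antipodal. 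This is the desired contradiction.

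The only nontrivial ingredient above is the existence of the continuous antipodal boundary extension $\bar f : \partial_\infty Z \to \F_d$, and this is the step I expect to be the main obstacle for a reader not already comfortable with the Kapovich--Leeb--Porti machinery; it is also precisely where the hypothesis that $f$ be \emph{uniformly regular} (rather than merely a quasi-isometric embedding) is used, since regularity is what guarantees that Gromov-boundary points are sent to well-defined flags and that distinct boundary points yield flags in general position. Once this extension is invoked, the combinatorial heart of the argument is already packaged in \Cref{cor:maximal} (which in turn rests on \Cref{thm:main} and hence on the restriction \eqref{eqn:d} on $d$), and the proof is complete in a few lines as above.
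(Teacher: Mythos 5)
Your proposal is correct and follows essentially the same route as the paper: both argue by contradiction, invoke the Kapovich--Leeb--Porti boundary extension (the paper cites \cite[Theorem 1.2]{MR3890767}, using the local compactness of $Z$) to get a continuous antipodal map $\partial_\infty Z \to \F_d$, and then play the local maximal antipodality of $\bar f(c(S^1))$ from \Cref{cor:maximal} against the antipodality of the extension along a sequence outside $c(S^1)$ converging into $c(S^1)$.
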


 \Cref{thm:main1} is proved in \S\ref{proofs:mainresults}. This result obstructs uniformly regular quasi-isometric embeddings of certain simply connected  complete Riemannian manifolds of non-positive sectional curvature (also called {\em Cartan–Hadamard manifolds}) into $X_d$: 
More precisely, if $Y$ is a Cartan–Hadamard manifold with  sectional curvature bounded below
and $Y$ admits a 
uniformly regular quasi-isometric embedding into $X_d$, then $Y$ is Gromov hyperbolic as a metric space  (by \cite[Theorem 1.2]{MR3890767}), whereas 
the Gromov boundary of $Y$ is homeomorphic to the sphere of dimension $\dim Y -1$ (by \cite[Theorem 2.10]{Kaimanovich}).
Thus, if $d$ satisfies \eqref{eqn:d}, then  \Cref{thm:main1} implies that
$\dim Y \le 2$. 
A special case of this is as follows:

\begin{ncorollary}\label{cor:URQI}
The hyperbolic plane is the only symmetric space of non-compact type that admits uniformly regular quasi-isometric embeddings into $X_d$ with $d$ satisfying  \eqref{eqn:d}.
\end{ncorollary}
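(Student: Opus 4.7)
The plan is to observe that the general obstruction stated in the paragraph preceding the corollary applies to any symmetric space $Y$ of non-compact type, and then to invoke the classification of low-dimensional such spaces. I would begin by verifying that $Y$ satisfies the hypotheses of that paragraph: $Y$ is simply connected, complete, and of non-positive sectional curvature, hence a Cartan-Hadamard manifold; and because $Y$ is a Riemannian homogeneous space, its sectional curvature is a continuous function on a compact set (the Grassmannian of tangent $2$-planes at a basepoint, modulo the isotropy action of the stabilizer), and is therefore bounded, in particular bounded below.

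With those hypotheses in place, the derivation given in the preceding paragraph yields $\dim Y \le 2$: $Y$ is Gromov hyperbolic by \cite[Theorem 1.2]{MR3890767}; its Gromov boundary is homeomorphic to $S^{\dim Y - 1}$ by \cite[Theorem 2.10]{Kaimanovich}; and when $\dim Y \ge 3$, any topologically embedded circle in $S^{\dim Y - 1}$ has empty interior --- since a subset of topological dimension $1$ cannot contain an open subset of a manifold of dimension $\ge 2$ --- so such a circle provides a topological embedding $c \colon S^{1} \to \partial_\infty Y$ whose image is not open, and Theorem \ref{thm:main1} then supplies the required contradiction under assumption \eqref{eqn:d}.

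To finish, I would invoke the elementary classification: a symmetric space of non-compact type has no Euclidean de Rham factor, and every irreducible symmetric space of non-compact type has real dimension at least $2$, with equality achieved only by the hyperbolic plane $\H^{2} = \SL(2,\R)/\SO(2)$. Combining this with $\dim Y \le 2$ forces $Y = \H^{2}$. There is no significant obstacle in this argument; essentially all of the work has already been carried out in Theorem \ref{thm:main1} and the surrounding discussion, and the corollary amounts to the observation that every symmetric space of non-compact type is a Cartan-Hadamard manifold with bounded sectional curvature, so the general obstruction applies without further input.
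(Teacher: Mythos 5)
Your proof is correct and follows essentially the same route as the paper, which states the corollary as a direct special case of the preceding paragraph (Cartan--Hadamard with curvature bounded below, hence Gromov hyperbolic with sphere boundary by \cite{MR3890767} and \cite{Kaimanovich}, then \Cref{thm:main1} forces $\dim Y \le 2$). The details you supply---homogeneity gives bounded curvature, an embedded circle in a sphere of dimension at least $2$ is not open, and the only symmetric space of non-compact type of dimension at most $2$ is $\H^2$---are exactly the intended ones.
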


In a similar vein, applying  \Cref{thm:main1}, a stronger conclusion than \Cref{cor:BA} can be obtained: 
finitely-generated groups $\G$, unless $\G$ is virtually a free group or a surface group, do not even admit uniformly regular quasi-isometric embeddings\footnote{Which need not arise from a group homomorphism $\G\to \SL(d,\R)$ as in \eqref{orbitmap}.} 
 into $X_d$ when $d$ satisfies \eqref{eqn:d}, 
since in such cases, if $\G$ admits a uniformly regular quasi-isometric embedding into $X_d$, then $\G$ is a hyperbolic group (\cite[Theorem 1.2]{MR3890767}) and there exist such non-isolated circles in $\partial_\infty\G$ (\cite[Corollary 2]{MR2146190}) as required by the hypothesis of \Cref{thm:main1} to get a contradiction.

\subsection*{Outline of this paper} 

In Section \ref{sec:heisen}, we present and prove the main technical result of this paper, Theorem \ref{thm:swaps}. Using this result, we establish Theorem \ref{thm:main} in Section \ref{sec:thm:main}. Subsequently, we apply Corollary \ref{cor:maximal}, an immediate consequence of Theorem \ref{thm:main}, to prove Theorems \ref{thm:main0} and \ref{thm:main1} in Section \ref{proofs:mainresults}. Finally, in Section \ref{sec:further}, we explore some additional applications of the methods introduced in this paper.

\subsection*{Acknowledgement} 
I extend my sincere thanks to Misha Kapovich and Yair Minsky for their  suggestions and encouragement.
I am grateful to Richard Canary, Su Ji Hong, Or Landesberg, and Max Riestenberg for the engaging discussions related to this work.
Special thanks to Hee Oh for her insightful question (referenced in \Cref{prop:oh}) and the  discussions stemming from it and to Misha Shapiro for very helpful discussions related to \Cref{thm:swaps}.
I express my gratitude to the referee for their careful review of this article and  for suggesting \Cref{cor:BAothersplit}.

\section{An involution on the intersection of two opposite maximal Schubert cells}
\label{sec:heisen}

The goal of this section is to state and prove the main technical result 
behind the results discussed in the introduction.
See \Cref{thm:swaps} below.

We recall that $\SL(d,\R)$ acts transitively on the set consisting of all antipodal pairs of points in $\F_d$.
From now on, we reserve the notation $\sigma_\pm$ for the {\em descending/ascending} flags defined as follows: Let $\R^d$ be equipped with the standard basis $\{e_1,\dots,e_d\}$. Define
\begin{align*}
 \sigma_+ &: \quad\{0\} \subset  {\rm span}\{e_d\} \subset {\rm span}\{e_d,e_{d-1}\} \subset \dots\subset {\rm span}\{e_d,\dots,e_1\} = \R^d,\\
 \sigma_- &: \quad\{0\} \subset  {\rm span}\{e_1\} \subset {\rm span}\{e_1,e_{2}\} \subset \dots\subset {\rm span}\{e_1,\dots,e_d\} = \R^d.
\end{align*}
It can be seen easily that $\sigma_\pm$ are antipodal.

We also reserve the notation $U_d$ to denote the subgroup of $\SL(d,\R)$ consisting of all upper-triangular unipotent matrices.
It is easy to check that $U_d$ fixes $\sigma_-$, and hence, preserves the big cell $\C_{\sigma_-}$. Moreover, $U_d$ acts on $\C_{\sigma_-}$ simply transitively, so we have a diffeomorphism
\[
 F_{\sigma_+} : U_d \to \C_{\sigma_-},\quad
 F(u) = u\sigma_+.
\]
For notational convenience, for all $\sigma\in \C_{\sigma_-}$, let us denote 
\[u_\sigma \coloneqq F_{\sigma_+}^{-1}(\sigma).\]
We identify $U_d$ with $\C_{\sigma_-}$ under the diffeomorphism $F_{\sigma_+}$.

Furthermore, we identify $U_d$ (hence $\C_{\sigma_-}$) with $\R^{d\choose 2}$ by sending a matrix $u\in U_d$ to the vector $(u_{ij})_{1 \le i<j\le d}$.
Under this identification,  $\sigma\in \C_{\sigma_-}$ lies in $\E_{\sigma_+} = \F_d\setminus \C_{\sigma_+}$ if and only if there exists some $k\in \{1,\dots,d-1\}$ such that $\sigma^{(k)} + \sigma_+^{(d-k)}$ is a proper subspace of $\R^d$ or, equivalently,
\[
p_k(u_\sigma) \coloneqq
  \frac{(u_{\sigma} e_{d-k+1}) \wedge \dots \wedge (u_{\sigma} e_{d}) \wedge e_{k+1} \wedge \dots \wedge e_{d}}{e_1\wedge\dots \wedge e_d } = 0.
\]
Thus, we can describe the set $\E_{\sigma_+}\cap \C_{\sigma_-}$ algebraically as a subset of $\R^{d\choose 2}$ by 
\[
 \E_{\sigma_+}\cap U_d = \bigcup_{k=1}^{d-1}  \E^k_{\sigma_+},
\]
where $\E^k_{\sigma_+} \coloneqq \{u\in U_d \mid p_k(u) = 0 \}$.

\begin{example}
 If $d = 3$, then
 \[
  p_1\left( \begin{bmatrix}
1 & x & y\\
 & 1 & z\\
 & & 1
\end{bmatrix} \right) = y,
\quad
p_2\left( \begin{bmatrix}
1 & x & y\\
 & 1 & z\\
 & & 1
\end{bmatrix} \right) = xz - y.
 \]
 Therefore,
 $\E_{\sigma_+}\cap U_3$ can be written as the union of the hypersurfaces $\E^1_{\sigma_+}=\{(x,y,z) \mid p_1(x,y,z) = y =0 \}$ and $\E^2_{\sigma_+} = \{ (x,y,z) \mid p_2(x,y,z) = xz - y = 0\}$ in $\R^3$.
 See \Cref{fig:flag_3}.
\end{example}

\begin{figure}[h]
  \centering
  \includegraphics[scale=.4]{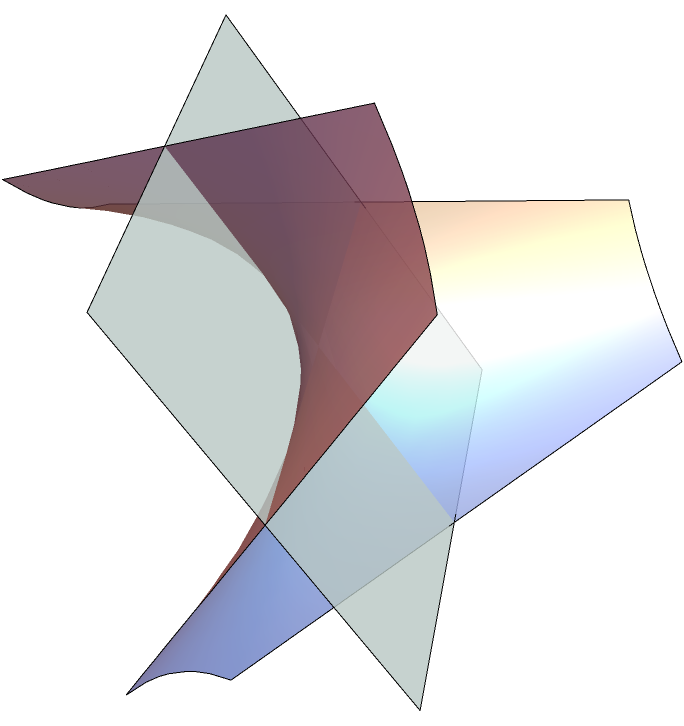}
  \caption{The part of the set $\E_{\sigma_+}$ lying in $\R^3 \cong \C_{\sigma_-} \subset \F_3$.
  The six components of $\C_{\sigma_-}\cap \C_{\sigma_+}$ are visible in the complement of this algebraic surface.}
  \label{fig:flag_3}
\end{figure}

The following lemma can be verified by linear algebra. We omit the details.

\begin{lemma}
 The polynomial $p_k(u)$ can be expressed as
 $
  p_{k}(u) = \det u^{(k)},
 $
 where  $u^{(k)}$ denotes the upper-right $k\times k$ block sub-matrix of $u$.
 In particular,
 \[
  \E^k_{\sigma_+} = \{ u\in U_d \mid \det u^{(k)} = 0  \}.
 \]
\end{lemma}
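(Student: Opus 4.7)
The plan is to verify the identity $p_k(u) = \det u^{(k)}$ by unpacking the definition of $p_k(u)$ and repeatedly using antisymmetry of the wedge product. The second assertion then follows tautologically from the definition of $\E^k_{\sigma_+}$.

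First, I would write out each factor $u e_j$ for $j \in \{d-k+1,\dots,d\}$ using that $u \in U_d$ is upper triangular unipotent, so
\[
u e_j = \sum_{i=1}^{j} u_{ij}\, e_i, \quad\text{with } u_{jj}=1.
\]
Substituting these expressions into the numerator
\[
(u e_{d-k+1}) \wedge \dots \wedge (u e_d) \wedge e_{k+1} \wedge \dots \wedge e_d
\]
and expanding multilinearly produces a sum of wedges of the form $e_{i_1}\wedge\cdots\wedge e_{i_k}\wedge e_{k+1}\wedge\cdots\wedge e_d$, each with coefficient $u_{i_1,d-k+1}u_{i_2,d-k+2}\cdots u_{i_k,d}$.

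The next key step is to observe that any index $i_r > k$ forces the wedge to vanish, since $e_{i_r}$ already appears among the fixed factors $e_{k+1},\dots,e_d$. Thus only indices $i_r \in \{1,\dots,k\}$ survive. For the wedge to be nonzero, $(i_1,\dots,i_k)$ must then be a permutation of $(1,\dots,k)$, and reordering the first $k$ wedge factors into the standard order produces a sign $\mathrm{sgn}(\pi)$. Collecting terms, the numerator becomes
\[
\left(\sum_{\pi \in S_k} \mathrm{sgn}(\pi)\, u_{\pi(1),\,d-k+1}\,u_{\pi(2),\,d-k+2}\cdots u_{\pi(k),\,d}\right) e_1\wedge\cdots\wedge e_d.
\]
The parenthesized sum is exactly $\det M$, where $M$ is the $k\times k$ matrix with entries $M_{ij}=u_{i,\,d-k+j}$ for $i,j\in\{1,\dots,k\}$; this is precisely the upper-right $k\times k$ block $u^{(k)}$. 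Dividing by $e_1\wedge\cdots\wedge e_d$ yields $p_k(u) = \det u^{(k)}$, and then $\E^k_{\sigma_+} = \{u\in U_d : p_k(u)=0\} = \{u\in U_d : \det u^{(k)} = 0\}$.

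I do not expect any substantive obstacle here: everything reduces to careful bookkeeping with wedge products, and the upper-triangular unipotent structure does the bulk of the work by constraining which basis vectors appear. The only mild subtlety is the sign that arises when reordering $e_{i_1}\wedge\cdots\wedge e_{i_k}$ into $e_1\wedge\cdots\wedge e_k$, which gives precisely $\mathrm{sgn}(\pi)$ and matches the Leibniz formula for the determinant.
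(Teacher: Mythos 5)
Your computation is correct and is exactly the ``linear algebra verification'' the paper omits: expanding the wedge product and noting that any basis vector $e_i$ with $i>k$ is annihilated by the fixed factors $e_{k+1}\wedge\cdots\wedge e_d$ reduces the numerator to the Leibniz expansion of the determinant of the block with rows $1,\dots,k$ and columns $d-k+1,\dots,d$, i.e.\ $\det u^{(k)}$. One tiny remark: the constraint $i_r\le k$ comes entirely from this wedge-vanishing, so upper-triangularity (and unipotence) of $u$ is not actually needed for the identity, only for the surrounding setup.
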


We define an involution 
\[
 \iota : U_d \to U_d, \quad  u \mapsto u^{-1}. 
\]
This simple involution plays a key role in this paper.
Note that $\iota$ is a diffeomorphism, ${\rm Fix}(\iota) = \{{\mathbf{I}}\}$, where ${\mathbf{I}}$ denotes the identity matrix, and $d\iota\vert_{T_{\mathbf{I}} U_d} = -{\rm id}$.

\begin{proposition}\label{prop:jac}
For all $k\in \{1,\dots,d-1\}$,
 \[p_k(u^{-1}) = (-1)^{k(d+1)}p_{d-k}(u).\] 
 In particular, $\iota(\E_{\sigma_+}^k) = \E_{\sigma_+}^{d-k}$, $\iota$ preserves $\E_{\sigma_+}\cap U_d$ and, hence, preserves $U_d\setminus \E_{\sigma_+}$.
\end{proposition}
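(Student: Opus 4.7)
The plan is to reduce the identity to Jacobi's complementary minor identity (for the minors of the inverse of an invertible matrix) and then carefully track the sign.

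First I would recall the lemma just stated, which rewrites $p_k(u) = \det u^{(k)}$ as a $k\times k$ minor of $u$ with rows $I_k = \{1,\dots,k\}$ and columns $J_k = \{d-k+1,\dots,d\}$. So $p_k(u^{-1})$ is the corresponding minor of $u^{-1}$, namely $\det\bigl[(u^{-1})_{I_k,J_k}\bigr]$. I would invoke Jacobi's complementary minor identity: for any invertible $A$ and any $I,J\subset\{1,\dots,d\}$ of common size $k$,
\[
 \det\bigl[(A^{-1})_{I,J}\bigr] \;=\; \frac{(-1)^{\Sigma(I)+\Sigma(J)}}{\det A}\;\det\bigl[A_{J^c,I^c}\bigr],
\]
where $\Sigma(S)$ denotes the sum of the elements of $S$, and $J^c,I^c$ are the complements in $\{1,\dots,d\}$. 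Since $u\in U_d$ is unipotent, $\det u = 1$, so the prefactor collapses to the sign.

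Next I would identify the right-hand minor of $u$ explicitly. With $I = I_k$ and $J = J_k$, we have $I^c = \{k+1,\dots,d\}$ and $J^c = \{1,\dots,d-k\}$; thus $u_{J^c,I^c}$ is exactly the upper-right $(d-k)\times(d-k)$ block of $u$, i.e.\ $u^{(d-k)}$, and its determinant equals $p_{d-k}(u)$. Finally I would tally the sign:
\[
 \Sigma(I_k) = \tfrac{k(k+1)}{2},\qquad \Sigma(J_k) = kd - \tfrac{k(k-1)}{2},
\]
which add to $k(d+1)$. Combining, $p_k(u^{-1}) = (-1)^{k(d+1)} p_{d-k}(u)$, as claimed.

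For the second sentence: the identity forces $p_k(u^{-1}) = 0 \iff p_{d-k}(u) = 0$, so $\iota(\E^k_{\sigma_+}) \subset \E^{d-k}_{\sigma_+}$, and by symmetry of $\iota^2 = {\rm id}$ this inclusion is an equality. Taking the union over $k\in\{1,\dots,d-1\}$ shows that $\iota$ preserves $\E_{\sigma_+}\cap U_d$, and therefore also its complement $U_d\setminus \E_{\sigma_+}$.

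The only nontrivial step is the invocation of Jacobi's complementary minor identity and the bookkeeping of the two index sums; there is no real obstacle beyond keeping the conventions straight (which block of $u$ is being computed on each side, and which sign convention is in force for $\Sigma(I)+\Sigma(J)$). Since $\det u = 1$ trivially, one can alternatively give a short direct proof using the cofactor formula for $u^{-1}$ together with Laplace expansion, but the Jacobi identity makes the argument essentially a one-line computation.
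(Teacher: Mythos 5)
Your proof is correct and follows essentially the same route as the paper: both reduce $p_k(u^{-1})$ to Jacobi's complementary minor formula with $I=\{1,\dots,k\}$, $J=\{d-k+1,\dots,d\}$, use $\det u=1$, identify $u_{J^c,I^c}$ with the upper-right $(d-k)\times(d-k)$ block, and compute the sign $\Sigma(I)+\Sigma(J)=k(d+1)$. Your explicit verification of the ``in particular'' clause (vanishing of $p_k(u^{-1})$ iff vanishing of $p_{d-k}(u)$, plus $\iota^2=\mathrm{id}$) is a detail the paper leaves implicit, and it is fine.
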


\begin{proof}
 We apply the {\em Jacobi's complementary minor formula}: If $A$ is an invertible $d\times d$ matrix, then for any subsets $I,J \subset \{1,\dots,d\}$ of size $k$,
$$
  \det A_{IJ} = (-1)^{\sum I + \sum J} (\det A ) \det \left((A^{-1})_{J^c  I^c }\right).
$$
 Here we use the notation $A_{IJ}$ to denote the sub-matrix of $A$ obtained by its $I$'th rows and $J$'th columns.
 Since in our case $\det u = 1$, the above formula reduces to 
 \[\det ((u^{-1})_{IJ}) = (-1)^{{\sum I + \sum J}} \det u_{J^c I^c}.\]
 
 Fix $k\in \{1,\dots, d-1\}$.
 Notice that when $I = \{1,\dots, k\}$ and $J = \{d-k+1,\dots, d\}$, we have $p_k(\hat u) = \det \hat u_{IJ}$, $p_{d-k} (\hat u) = \det \hat u_{J^c I^c}$, and $\sum I + \sum J = k(d+1)$.
 Hence, by the formula in the previous paragraph,
 $
 p_{k}( u^{-1}) = (-1)^{k(d+1)} p_{d-k}(u).
 $
\end{proof}

By the above result, we thus have a well-defined involution $\iota$ on $U_d \setminus \E_{\sigma_+}$.
Our main result of this section is as follows:

\begin{theorem}\label{thm:swaps}
Suppose that $d$ is any natural number such that $d\ne 5$ and $d \equiv 2,3,4,5$, or $6 \mod 8$.
 Then, the involution $\iota : U_d \to U_d$ does not leave invariant any connected components of $U_d \setminus \E_{\sigma_+}$.
\end{theorem}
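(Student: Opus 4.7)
The plan is to construct continuous locally-constant sign invariants on $U_d\setminus \E_{\sigma_+}$ that are negated by $\iota$; any such invariant proves that no component can be preserved, since its sign is constant on each component and flipped by $\iota$. Since each $p_k$ is non-vanishing on $U_d\setminus\E_{\sigma_+}$, the sign vector $(\mathrm{sign}(p_1(u)),\ldots,\mathrm{sign}(p_{d-1}(u)))\in\{\pm 1\}^{d-1}$ is locally constant, and by \Cref{prop:jac} the involution $\iota$ acts on it by the rule
\[
(s_1,\ldots,s_{d-1}) \mapsto \bigl((-1)^{k(d+1)}\, s_{d-k}\bigr)_{k=1}^{d-1}.
\]
Components whose sign vectors are \emph{not} fixed by this action are automatically moved by $\iota$, so the real work lies on the strata with $\iota$-fixed sign vectors.

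Taking $k=d/2$ (which is an integer only when $d$ is even) in the fixed-point condition gives $s_{d/2}^{\,2}=(-1)^{(d/2)(d+1)}$, which is unsolvable precisely for $d\equiv 2\mod 4$. Equivalently, in that regime the polynomial $\phi(u)\coloneqq p_{d/2}(u)$ is non-vanishing on $U_d\setminus\E_{\sigma_+}$ and satisfies $\phi(u^{-1})=-\phi(u)$; its sign is then the desired invariant, disposing of the residues $d\equiv 2,6\mod 8$.

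For the remaining dimensions in the hypothesis, $d\equiv 3,4,5\mod 8$ with $d\neq 5$, $\iota$-fixed sign vectors do exist, and a finer invariant is required. The natural additional ingredient is the identity $(u^{-1})_{i,i+1}=-u_{i,i+1}$, which makes every superdiagonal coordinate $\iota$-odd. Stratum by stratum, I would seek a polynomial $\psi$ in the entries of $u$ --- built as a signed combination of products of the $u_{i,i+1}$ and of carefully chosen minors of $u$ --- whose sign is locally constant on each component and is negated by $\iota$; the crucial check is that the sign constraints on the $p_k$'s cutting out the stratum must force $\psi\neq 0$ on the relevant components. Producing such a $\psi$ for exactly the allowed residues is the main obstacle: because \Cref{thm:swaps} genuinely fails for $d\equiv\pm 1\mod 8$ (cf.\ \Cref{rem:resd}), no purely formal construction can succeed, and one expects the argument to require an arithmetic input tied to the congruence class of $d$ modulo $8$, for instance a Hurwitz--Radon- or Clifford-module-type structure on $\R^{\binom{d}{2}}$, or a direct analysis of the exponential-coordinate polynomials $\tilde p_k(X)\coloneqq p_k(\exp X)$. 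I expect this last step to be the technical heart of the argument.
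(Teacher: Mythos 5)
Your argument for $d\equiv 2,6\bmod 8$ is correct and is exactly the paper's: since $d/2$ is odd, \Cref{prop:jac} gives $p_{d/2}(u^{-1})=-p_{d/2}(u)$, so the sign of $p_{d/2}$ is a locally constant invariant negated by $\iota$, and no component can be preserved. The gap is everything else. For $d\equiv 3,4,5\bmod 8$ (which includes $d=3$, $d=4$, and all the odd cases $d\ge 7$ of the theorem) you do not produce the finer invariant: you only describe a search for a polynomial $\psi$ negated by $\iota$ and nonvanishing on the relevant strata, and you yourself flag its construction as "the main obstacle" and "the technical heart." That heart is missing, so the proposal does not prove the statement in the hard cases. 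Note also that the sign vector $(\mathrm{sign}\,p_1,\dots,\mathrm{sign}\,p_{d-1})$ cannot by itself separate components (there are $3\cdot 2^{d-1}$ components of $U_d\setminus\E_{\sigma_+}$, more than the $2^{d-1}$ possible sign vectors), so even on strata with $\iota$-fixed sign vector one needs an invariant of a genuinely different nature, and your correct observation that the theorem fails for $d\equiv\pm1\bmod 8$ shows no soft construction can close this.

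For comparison, the paper does not look for real polynomial invariants at all in these cases. It invokes the Shapiro--Shapiro--Vainshtein classification: via the Berenstein--Fomin--Zelevinsky/Lusztig factorization \eqref{eqn:factorization}, each component of $\C_{\sigma_-}\cap\C_{\sigma_+}$ corresponds to a $\GG_n$-orbit in $T^n(\FF_2)$, $n=d-1$; \Cref{lemma:inversionformula} shows that $u\mapsto u^{-1}$ acts on these sign-pattern matrices by reflection across the anti-diagonal followed by adding $1$ to every entry; and then the orbit-structure theorem of \cite{MR1631773} (slices, height vectors, and the quadratic form $Q$) is used to decide which orbits are swapped. The mod-$8$ condition emerges there from parity counts such as $\frac{\frac n2(\frac n2+1)}{2}\bmod 2$, i.e., from $\FF_2$-quadratic-form computations rather than from a Hurwitz--Radon or Clifford-module structure. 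So your proposal coincides with the paper only on the $d\equiv 2\bmod 4$ case and is incomplete precisely where the theorem is difficult.
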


The proof of \Cref{thm:swaps} is split into several cases and occupies the rest of this section. Here is our plan:
The proof for the case $d=3$ is discussed in \S\ref{case:d3} and see \S\ref{case:d2mod4} for the case when $d$ is of the form $4k+2$.
These initial cases are approached in an elementary manner.
However, as our elementary approach appears to be insufficient for the remaining cases, we rely upon some sophisticated invariants developed in Shapiro--Shapiro--Vainshtein \cite{MR1446839,MR1631773}, which characterizes the connected components of $\C_{\sigma_-}\cap \C_{\sigma_+}$ in a combinatorial manner. In \S\ref{sec:SSV}, we recall some necessary background on these papers. Subsequently, the proof for $d=4$ is discussed in \S\ref{sec:d4}. Following that, we prove the theorem in the rest of the odd cases of $d$ in \S\ref{sec:dodd} and in the remaining even cases of $d$ in \S\ref{sec:deven}.

\begin{remark}\label{rem:swaps}
 When $d$ is of the form $8m\pm 1$, then the $\iota : U_d \to U_d$ leaves invariant some components of $U_d \setminus \E_{\sigma_+}$, see below.
 Therefore, \Cref{thm:swaps} is false in those cases of $d$. Cf. \Cref{proof:oddcases}(ii).
 When $d$ is of the form $8m$, we are unable to make the conclusion because we could not study some ``exceptional'' connected components.
 See \Cref{rem:8m}.
 However, in all these cases, the total number of these components is quite ``small'' compared to the total number of connected components of $U_d \setminus \E_{\sigma_+}$.

However, with help from Su Ji Hong, we managed to computationally verify \Cref{thm:swaps} for $d=5$.  Despite this effort, we decided to exclude the $d=5$ case because we couldn't find a way to present a proof for it.
\end{remark}

\subsection{Proof of Theorem \ref*{thm:swaps} when $d=3$}\label{case:d3}

When $d=3$, the connected components of $U_3 \setminus \E_{\sigma_+}$ in $\R^3 = U_3$ are
\begin{align*}
   \Omega_1 &= \{ (x,y,z) \mid x>0,~y>0,~z >0, ~xz - y >0 \},\\
   \widehat\Omega_1 &= \{ (x,y,z) \mid x<0,~y>0,~z<0, ~xz - y >0\},\\
   \Omega_2 &= \{ (x,y,z) \mid x<0,~y<0,~z >0, ~xz - y <0 \},\\
   \widehat\Omega_2 &= \{ (x,y,z) \mid x>0,~y<0,~z<0, ~xz - y <0 \},\\
   \Omega_3 &= \{ (x,y,z) \mid y>0, ~xz - y <0 \},\\
   \widehat\Omega_3 &= \{ (x,y,z) \mid y<0, ~xz - y >0 \}.
\end{align*}
See \Cref{fig:flag_3}.
By picking a representative in each component and applying $\iota$ to the representative, it can be checked that, for $k=1,2,3$, $\iota\Omega_k = \widehat\Omega_k$.
We omit the details.

\subsection{Proof of Theorem \ref*{thm:swaps} when $d \equiv 2 \mod 4$}\label{case:d2mod4}

Suppose that $d \equiv 2 \mod 4$.
Let $u\in U_d \setminus \E_{\sigma_+}$ be any point.
 By \Cref{prop:jac}, $\iota u\in U_d \setminus \E_{\sigma_+}$.
 Let $c: [-1,1] \to U_d$, $c(\pm1) = u^{\pm 1}$, be any path.
 We show that such a path $c$ must intersect $\E_{\sigma_+}\cap U_d$.
 In this case, since $d/2$ is odd, by \Cref{prop:jac}, 
 \[p_{d/2}(u^{-1}) = -p_{d/2}(u).\]
 Thus, by continuity, the image of $c$ must intersect $\E^{d/2}_{\sigma_+}$.
 Therefore, $u$ and $\iota u$ lie in different connected components of $U_d \setminus \E^{d/2}_{\sigma_+}$, and hence, of $U_d \setminus \E_{\sigma_+}$.

\subsection{Some preparation before the proof of Theorem \ref*{thm:swaps} in the remaining cases}\label{sec:SSV}

Before discussing the proof of \Cref{thm:swaps} in the rest of the cases, we recall some notions from Shapiro--Shapiro--Vainshtein \cite{MR1446839,MR1631773}.
Throughout, we try to be consistent with their papers so that we can freely refer to those  for more details.

Let $n \coloneqq d-1$.
Denote by $T^n = T^n(\FF_2)$ the vector space of all $n\times n$ upper-triangular matrices with $\FF_2$-valued entries, where $\FF_2 = \{0,1\}$ is the finite field of order $2$.
There is certain subgroup $\GG_n < {\rm GL}(T^n)$ acting linearly on $T^n$.
This action called the {\em first} $\GG_n$-action. See the introduction of \cite{MR1631773}.
There is also another $\GG_n$-action defined in that paper, which is called the {\em second} $\GG_n$-action. However, in the present article, we do not need to discuss the second action, and hence, we will call the {\em first $\GG_n$-action} simply by {\em $\GG_n$-action}.
For reader's convenience, we recall this action.
For $1\le i \le j \le n-1$, let $g_{ij} \in {\rm GL}(T^n)$ be the element acting linearly on $T^n$ as follows:
Let $M^{ij}$ be the $2\times 2$ sub-matrix of $M$ formed by the rows $i$ and $i+1$, and the columns $j$ and $j+1$ (or, its upper-triangle when $i=j$). Then, $g_{ij} \cdot M$ is the matrix obtained by adding to each entry of $M^{ij}$ its trace, and keeping the rest of the entries of $M$ unchanged.
The subgroup $\GG_n < {\rm GL}(T^n)$ is generated by all these $g_{ij}$'s.

\begin{example}
For an illustration, we revisit the case $d = n+1=3$: In this case,
 \begin{equation}\label{eqn:T2}
 T^2 = \left\{
 \begin{bmatrix}0 & 0\\ & 0 \end{bmatrix},
 \begin{bmatrix}0 & 1\\ & 0 \end{bmatrix},
 \begin{bmatrix}1 & 0\\ & 1 \end{bmatrix},
 \begin{bmatrix}1 & 1\\ & 1 \end{bmatrix},
 \begin{bmatrix}0 & 0\\ & 1 \end{bmatrix},
 \begin{bmatrix}1 & 1\\ & 0 \end{bmatrix},
 \begin{bmatrix}1 & 0\\ & 0 \end{bmatrix},
 \begin{bmatrix}0 & 1\\ & 1 \end{bmatrix}
 \right\},
\end{equation}
and $\GG_2$ is the group of order 2, with $g_{11}$ as its nontrivial generating element.
We note that $\GG_2$ precisely has four fixed points in $T^2$, represented by the initial four elements in \eqref{eqn:T2}.
The remaining four elements in $T^2$ form two distinct $\GG_2$-orbits.
Consequently, there are six $\GG_2$-orbits in total.
Remarkably, each of these orbits corresponds to a unique connected component of $U_3 \setminus \E_{\sigma_+}$ (see \S\ref{case:d3}).
We elaborate this further in our discussion below.

When $d= n+1=4$, the $\GG_3$-orbits in $T^3$ can be found in \Cref{table}.
\end{example}

By \cite{MR1446839}, the connected components of $\C_{\sigma_-}\cap \C_{\sigma_+}$ are in one-to-one correspondence with the $\GG_n$-orbits in $T^n$.
The correspondence can be realized as follows (see \cite[\S 2, \S 3]{MR1446839} for more details): Let $S_{n+1}$ denote the group of all permutations of $\{1,\dots,n+1\}$, let $w_0$ denote the {\em longest element} in $S_{n+1}$, and
let $s_k$, $k=1\dots,n$, denote the transposition which permutes $k$ and $k+1$ in $\{1,\dots,n+1\}$.
The element $w_0$ can be written as
\begin{equation}\label{eqn:order}
  w_0 = (s_1s_2\dots s_{n})( s_1 s_2 \dots s_{n-1}) \dots (s_1 s_2 s_3)(s_1 s_2) (s_1).
\end{equation}
Corresponding  to this (fixed) reduced decomposition of $w_0$, by \cite{MR1405449,MR1327548}, a generic matrix $u\in U_d = U_{n+1}$ can be uniquely factorized as
\begin{equation}\label{eqn:factorization}
\begin{split}
   u = ({\mathbf{I}}+ t_{1n} {\mathbf E}_{s_{1}})({\mathbf{I}}&+ t_{2(n-1)} {\mathbf E}_{s_2})\dots ({\mathbf{I}}+ t_{n1} {\mathbf E}_{s_{n}})\\
  &({\mathbf{I}}+ t_{1(n-1)} {\mathbf E}_{s_1})\dots  ({\mathbf{I}}+ t_{(n-1)1}{\mathbf E}_{s_{n-1}})\dots\\
  &\hspace{5em}\dots ({\mathbf{I}}+ t_{12} {\mathbf E}_{s_{1}}) ({\mathbf{I}}+ t_{21} {\mathbf E}_{s_{2}})
  ({\mathbf{I}}+ t_{11} {\mathbf E}_{s_{1}}),
\end{split}
\end{equation}
where $t_{ij}$ represents the coefficient of ${\mathbf E}_{s_i}$ when ${\mathbf E}_{s_i}$ appears the $j$-th time from the right to left in the above expression,
 $t_{ij}$ are non-zero real numbers, and
${\mathbf E}_{s_i}$ denotes the $(n+1)\times (n+1)$ matrix with only non-zero entry $1$ at the place $(i,i+1)$.
Using this unique factorization of $u$, we assign to it the matrix $M_u\in T^n$ given by 
\begin{equation}\label{eqn:Mu}
M_u =
  \begin{bmatrix}
\epsilon_{11} & \epsilon_{21} & \dots & \epsilon_{(n-1)1} & \epsilon_{n1}  \\
 & \epsilon_{12} &  & & \epsilon_{(n-1)2}  \\
 &  & \multicolumn{2}{c}{\smash{\raisebox{.5\normalbaselineskip}{\diagdots{5em}{.5em}}}} & \vdots \\ 
 &  & \multicolumn{2}{c}{\smash{\raisebox{.5\normalbaselineskip}{\diagdots{5em}{.5em}}}}   & \epsilon_{2(n-1)} \\
 &  &  & & \epsilon_{1n} \\
\end{bmatrix},
\end{equation}
where $\epsilon_{ij} = 0$ if $t_{ij}>0$ in \eqref{eqn:factorization}, and $\epsilon_{ij} = 1$ if $t_{ij}<0$. Cf. \cite[\S2.8]{MR1446839}.

\begin{lemma}\label{lemma:inversionformula}
 For a generic matrix $u\in U_d$, 
 the matrices $M_u, M_{u^{-1}}$ are related by
 \[
  (M_{u^{-1}})_{ij} = (M_u)_{(n+1-j) (n+1-i)} +1, \quad \text{for all } 1\le i\le j\le n.
 \]
\end{lemma}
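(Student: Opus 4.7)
The plan is to exploit a commutation-equivalence between the reduced word
\[
\mathbf{w} = (s_1 s_2 \cdots s_n)(s_1 \cdots s_{n-1}) \cdots (s_1 s_2)(s_1)
\]
from \eqref{eqn:order} and its reversal
$\mathbf{w}^{\rm op} = (s_1)(s_2 s_1)(s_3 s_2 s_1) \cdots (s_n \cdots s_1)$.
The first step is to verify that $\mathbf{w}$ and $\mathbf{w}^{\rm op}$ are commutation-equivalent -- that is, related by a sequence of the moves $s_i s_j = s_j s_i$ with $|i-j|\ge 2$ alone, without any braid moves. By the standard characterization of commutation-equivalence in a trace monoid, it suffices to check that for each pair of adjacent generators $(s_i, s_{i+1})$, the sub-word obtained by restricting to $\{s_i, s_{i+1}\}$ coincides in $\mathbf{w}$ and $\mathbf{w}^{\rm op}$. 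A block-by-block count shows both sub-words equal the alternating string $s_i s_{i+1} s_i s_{i+1} \cdots s_i$ of length $2(n-i)+1$.

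Inverting \eqref{eqn:factorization} factor-by-factor via $({\mathbf{I}} + t{\mathbf E}_{s_i})^{-1} = {\mathbf{I}} - t{\mathbf E}_{s_i}$ then produces a factorization of $u^{-1}$ along $\mathbf{w}^{\rm op}$, in which the $m$-th occurrence of $s_i$ from the left (lying in the $(i+m-1)$-st block of $\mathbf{w}^{\rm op}$) carries the coefficient $-t_{i,m}(u)$. Applying commutation moves to rewrite this expression along $\mathbf{w}$ preserves both the coefficient attached to each ${\mathbf E}_{s_i}$-factor and the relative order of occurrences of any fixed generator $s_i$; hence the $m$-th occurrence of $s_i$ from the left in $\mathbf{w}$ is also labeled $-t_{i,m}(u)$. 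On the other hand, in the standard factorization of any generic $v \in U_d$, the $m$-th occurrence of $s_i$ from the left sits in the $m$-th block (of length $n+1-m$) and is assigned the parameter $t_{i,\, n+2-m-i}(v)$. Specializing to $v = u^{-1}$ yields the key identity
\[
t_{i,\, n+2-m-i}(u^{-1}) = -\,t_{i,m}(u), \qquad 1 \le i \le n,\ 1 \le m \le n+1-i.
\]

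Reindexing with $j = n+2-m-i$ and translating sign-flips to $\FF_2$-values gives $\epsilon'_{i,j} \equiv \epsilon_{i,\,n+2-i-j} + 1 \pmod{2}$. Finally, using the position-to-index rule $(M_u)_{k\ell} = \epsilon_{\ell-k+1,\,k}$ read off from \eqref{eqn:Mu} (and the same rule for $M_{u^{-1}}$), this identity transforms into exactly $(M_{u^{-1}})_{ij} = (M_u)_{n+1-j,\,n+1-i} + 1$ for all $1 \le i \le j \le n$. The only nontrivial step is the commutation-equivalence of $\mathbf{w}$ and $\mathbf{w}^{\rm op}$; once that is in hand, the argument reduces to direct index bookkeeping.
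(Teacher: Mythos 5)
Your proof is correct and takes essentially the same route as the paper: invert \eqref{eqn:factorization} factor-by-factor via $({\mathbf{I}}+t{\mathbf E}_{s_i})^{-1}={\mathbf{I}}-t{\mathbf E}_{s_i}$ and then reorder the reversed product along the fixed reduced word of $w_0$ using only the commutations of ${\mathbf E}_{s_i}$ and ${\mathbf E}_{s_j}$ for $|i-j|\ge 2$, keeping track of the sign flips and of which occurrence of each $s_i$ carries which parameter. Your explicit projection-criterion check that the word and its reversal are commutation-equivalent, together with the identity $t_{i,\,n+2-m-i}(u^{-1})=-t_{i,m}(u)$, just makes precise the step the paper asserts briefly (and your indexing is the consistent one: the paper's displayed shift $t'_{ij}=-t_{i(n+1-j)}$ should be read as $-t_{i,\,n+2-i-j}$, which is exactly what the anti-diagonal reflection in the lemma's conclusion encodes).
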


\begin{proof}
Suppose that the factorization of $u$ is given by \eqref{eqn:factorization} and the corresponding $M_u$ has the expression given by \eqref{eqn:Mu}.
To obtain an expression for $M_{u^{-1}}$, we first notice
\begin{multline*}
  u^{-1} = ({\mathbf{I}}- t_{11} {\mathbf E}_{s_{1}})({\mathbf{I}}- t_{21} {\mathbf E}_{s_{2}})({\mathbf{I}}- t_{12} {\mathbf E}_{s_{1}}) \dots\\
  \dots ({\mathbf{I}}- t_{(n-1)1}{\mathbf E}_{s_{n-1}}) \dots ({\mathbf{I}}- t_{1(n-1)} {\mathbf E}_{s_1})\\
  ({\mathbf{I}}- t_{n1} {\mathbf E}_{s_{n}})\dots
  ({\mathbf{I}}- t_{2(n-1)} {\mathbf E}_{s_2})({\mathbf{I}}- t_{1n} {\mathbf E}_{s_{1}}) 
\end{multline*}
Observe that this expression is not in the correct order required by the chosen reduced form of $w_0$ in \eqref{eqn:order}; cf. \eqref{eqn:factorization}.
However, using the fact that ${\mathbf E}_{s_{i}}$ and ${\mathbf E}_{s_{j}}$ commute if $|i-j|\ge 2$, we can put this expression easily in the desired form \eqref{eqn:factorization}:
If $t_{ij}'$ denote the coefficients involved in this expression for $u^{-1}$, then 
\[t_{ij}' = -t_{i(n+1-j)}.\]
Hence, the matrix $M_{u^{-1}}$ is derived from $M_u$ through a two-step process: first, reflecting it across its anti-diagonal to adjust the indices, and then adding $1$ to each entry in the upper-triangular region to accommodate the sign changes.
This verifies the lemma.
\end{proof}

To each connected component $\Omega$ of $\C_{\sigma_-}\cap \C_{\sigma_+}$, we associate the set
\[S_\Omega \coloneqq \{M_u \mid u\in\Omega \text{ is generic}\} \subset T^n.\]
The vector space $T^n$ partitions into the subsets of the form $S_\Omega$ and the correspondence $\Omega \leftrightarrow S_\Omega$ is one-to-one.
Moreover, 
by the Main Theorem of \cite{MR1446839}, the subsets $S_\Omega$ are precisely the orbits of the $\GG_n$-action.
Let us define an involution $\iota : T^n \to T^n$ by
\begin{equation}\label{eqn:ipdef}
  \iota(M)_{ij} \coloneqq M_{(n+1-j) (n+1-i)} +1, \quad \text{for all } 1\le i\le j\le n.
\end{equation}
A consequence of \Cref{lemma:inversionformula} is that
\begin{equation}\label{eqn:ip}
  \iota S_\Omega = S_{\iota\Omega}.
\end{equation}
Thus, $\Omega$ is $\iota$-invariant if and only if $S_\Omega$ is.
The first part of the following lemma records this discussion.

\begin{lemma}\label{lemma:io}
 Let $n\in\N$. The involution $\iota : U_{n+1} \to U_{n+1}$ preserves a connected component $\Omega$ of $\C_{\sigma_-}\cap \C_{\sigma_+}$ if and only if the map $\iota : T^n \to T^n$ leaves $S_\Omega$ invariant.
 
 Moreover, the involution $\iota$ has no fixed points in $T^n$.
 In particular, no singleton $\GG_n$-orbits are preserved by $\iota$. 
\end{lemma}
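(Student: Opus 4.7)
Both assertions are short, and the plan is to handle them separately.

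For the first, I will appeal directly to the facts established just above. The correspondence $\Omega \mapsto S_\Omega$ is a bijection from the set of connected components of $\C_{\sigma_-}\cap \C_{\sigma_+}$ to the set of $\GG_n$-orbits in $T^n$ by the main theorem of \cite{MR1446839}, so in particular the subsets $S_\Omega$ partition $T^n$. Combined with \eqref{eqn:ip}, which reads $\iota S_\Omega = S_{\iota\Omega}$ and is an immediate consequence of Lemma \ref{lemma:inversionformula} and the definition \eqref{eqn:ipdef}, the equivalence ``$\Omega = \iota\Omega$ if and only if $S_\Omega = \iota S_\Omega$'' is automatic.

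For the second, the plan is to show that $\iota$ moves every $M \in T^n$ by inspecting a single entry. The natural place to look is along the anti-diagonal of $T^n$, i.e., at indices $(i,j)$ with $i \le j$ and $i+j = n+1$: such $(i,j)$ are fixed by the index-swap appearing in \eqref{eqn:ipdef}, so
\[
\iota(M)_{ij} \;=\; M_{(n+1-j)(n+1-i)} + 1 \;=\; M_{ij} + 1 \pmod 2.
\]
Since the entry $(1,n)$ is always of this form (valid for any $n\ge 1$, which holds as $d = n+1 \ge 2$), we get $\iota(M)_{1n} \ne M_{1n}$ and hence $\iota(M) \ne M$. Every singleton $\GG_n$-orbit $\{M\}$ then satisfies $\iota(\{M\}) = \{\iota(M)\} \ne \{M\}$, confirming that no singleton orbit is $\iota$-invariant.

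I expect no real obstacle here: the first part is a formal consequence of \eqref{eqn:ip} combined with the bijection from \cite{MR1446839}, while the second part reduces to the observation that the anti-diagonal of $T^n$ is preserved by the index-swap in \eqref{eqn:ipdef}, forcing the $+1$ to produce a disagreement at the entry $(1,n)$.
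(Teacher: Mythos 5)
Your proposal is correct and matches the paper's own argument: the first part is exactly the formal consequence of the bijection $\Omega \leftrightarrow S_\Omega$ from \cite{MR1446839} together with \eqref{eqn:ip}, and the second part is the paper's observation that the upper-right corner entry $(1,n)$ — an anti-diagonal index fixed by the index swap in \eqref{eqn:ipdef} — gets shifted by $1$, so $\iota(M)\ne M$ for every $M\in T^n$.
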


The ``moreover'' part of the lemma above is verified by noticing that the entries in upper-right corner of $M \in T^n$ and $\iota(M)$ are  different.

\medskip
We identify the dual space $(T^n)^*$ with the space of $n\times n$ upper-triangular matrices with $\FF_2$-entries so that, for $M\in T^n$ and $M^*\in (T^n)^*$, 
\begin{equation}\label{eqn:pairing}
 \langle M,M^*\rangle = \sum_{i\le j} M_{ij} M^*_{ij}.
\end{equation}
We recall the elements $E_k\in T^n$ and $R_k \in (T^n)^*$, $k= 1,\dots, n$, from \cite[\S 2.1]{MR1631773}
\[
 E_k = \sum_{s-r = k-1} E_{rs},\quad \text{and} \quad
 R_k = \sum_{1\le r\le k\le s\le n} E_{rs},
\]
where $E_{rs}$ denotes the matrix whose only nontrivial entry is at the position $(r,s)$.
The subspace of $(T^n)^*$ (resp. $T^n$) spanned by the matrices $R_k$'s (resp. $E_k$'s) is denoted by $\mathcal{D}_n$ (resp. $\mathcal{I}_n$).
One checks that $\iota E_{k} = \sum_{i\ne k} E_i$, and hence,
\begin{equation}\label{eqn:iotain}
  \iota \mathcal{I}_n = \mathcal{I}_n. 
\end{equation}
Moreover, note that the matrices $E_k$ are symmetric with respect to the anti-diagonal; therefore, any element $I\in\mathcal{I}_n = {\rm span}\{E_1,\dots,E_n\}$ is also symmetric with respect to the anti-diagonal.
Hence,
\begin{equation}\label{eqn:iota_commutes}
  \iota(I + M) = I +\iota(M), \quad
 \forall M\in T^n,\, \forall I\in \mathcal{I}_n.
\end{equation}

Let $\mathcal{D}_n^\perp \subset T^n$ denote the subspace orthogonal to $\mathcal{D}_n$ with respect to the standard pairing $\langle \cdot,\cdot\rangle$ in \eqref{eqn:pairing}.
A translation of $\mathcal{D}_n^\perp$ by a matrix $M\in T^n$ is called a {\em slice}.
If $S \subset T^n$ is a slice, then its {\em height} $h^S$ is defined to be the vector \[h^S = (h_1^S,\dots,h_n^S) \in \FF^n_2,\] where $h_k^S \coloneqq \langle M,R_k\rangle \in\FF_2$ and  $M\in S$ is a(ny) matrix.
A straightforward computation using \eqref{eqn:ipdef} shows that for any $M\in T^n$,
\[
 \langle \iota (M), R_{n+1-k} \rangle \equiv
 \langle M, R_k \rangle + k(n+1-k)\mod 2.
\]
In particular, we see that $\iota$ sends slices to slices.
The following lemma relates the height vectors of $S$ and ${\iota S}$, which follows from the formula above:

\begin{lemma}\label{lem:height}
 For every slice $S \subset T^n$ and $k\in\{1,\dots,n\}$, 
 \[h_k^S \equiv h^{\iota S}_{n+1-k} + k(n+1-k)\mod 2.\]
\end{lemma}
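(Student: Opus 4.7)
The plan is to derive the lemma directly from the pairing identity stated immediately before it, namely
$$\langle \iota(M), R_{n+1-k}\rangle \equiv \langle M, R_k\rangle + k(n+1-k) \pmod 2,$$
which is asserted without proof in the excerpt. Once this identity is in hand, the lemma reduces to unwinding definitions: the height $h^S_k$ is a well-defined function of the slice $S$ (since a slice is a coset of $\mathcal{D}_n^\perp$, the pairing against any $R_k\in \mathcal{D}_n$ is independent of the chosen representative), so evaluating both sides at the same representative $M\in S$ and its image $\iota(M)\in \iota S$ yields the stated relation.

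The core of the argument is therefore to verify the pairing identity. First I would expand the left side using \eqref{eqn:pairing} and the explicit expression for $R_{n+1-k}$:
$$\langle \iota(M), R_{n+1-k}\rangle = \sum_{1\le r\le n+1-k\le s\le n} \iota(M)_{rs}.$$
Substituting $\iota(M)_{rs} = M_{(n+1-s)(n+1-r)} + 1$ from \eqref{eqn:ipdef} and changing variables $r' = n+1-s$, $s' = n+1-r$, the pair $(r',s')$ ranges over exactly the index set $\{(r',s') : 1\le r'\le k\le s'\le n\}$ defining $R_k$. This transforms the sum into
$$\sum_{1\le r'\le k\le s'\le n}\bigl(M_{r's'} + 1\bigr) \;=\; \langle M, R_k\rangle + N \pmod 2,$$
where $N$ is the cardinality of the new index set.

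A direct count gives $N = k(n+1-k)$, since $r'$ has $k$ choices and $s'$ has $n+1-k$ choices. This proves the pairing identity. Applying it to any $M\in S$ and recognizing $\langle M, R_k\rangle = h^S_k$ and $\langle \iota(M), R_{n+1-k}\rangle = h^{\iota S}_{n+1-k}$ yields the lemma. I do not anticipate any real obstacle: the argument is purely a bookkeeping computation involving an anti-diagonal index substitution, and the only nontrivial fact used — that $\iota$ sends slices to slices and that heights are well-defined on slices — is part of the setup preceding the lemma.
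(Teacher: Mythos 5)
Your proposal is correct and follows essentially the same route as the paper: the paper derives the lemma from the displayed pairing identity $\langle \iota(M),R_{n+1-k}\rangle \equiv \langle M,R_k\rangle + k(n+1-k) \pmod 2$, which it calls a straightforward computation from \eqref{eqn:ipdef}, and your anti-diagonal change of variables with the count $k(n+1-k)$ is exactly that computation carried out in detail. The reduction to representatives of the cosets $S$ and $\iota S$ is handled as in the paper's setup, so nothing is missing.
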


Note that $\mathcal{D}_n^\perp$ is the slice at height zero.
Moreover, the correspondence $S \leftrightarrow h^S$ is one-to-one.
A slice $S$ is called {\em symmetric} if its height vector $h^S$ is a symmetric vector with respect to its middle, i.e., for all $k\in\{1,\dots,n\}$, $h_k^S = h^S_{n+1-k}$.
By the $\GG_n$-orbit structure theorem \cite[Theorem 2.2]{MR1631773}, every orbit of $\GG_n\acts T^n$ lies in some slice $S\subset T^n$.

With the help of \Cref{lemma:io} and \Cref{lem:height}, our strategy now is to apply \cite[Theorem 2.2]{MR1631773} to check if any $\GG_n$-orbit is preserved under the involution $\iota$.

\subsection{Proof of Theorem \ref*{thm:swaps} when $d=4$}\label{sec:d4}

We give a proof of \Cref{thm:swaps} when $d=4$. This case is illustrative, and also does not fit into the discussion of the more general cases below.
In this case, $T^3$ has $64$ elements, and there are twenty $\GG_3$-orbits, each corresponds to one connected component of $U_4 \setminus \E_{\sigma_+}$.
The orbits are listed in \Cref{table}.
The theorem can be verified directly from the table.

\subsection{Proof of Theorem \ref*{thm:swaps} when $d$ is odd and $d\ge 7$}\label{sec:dodd}
More precisely, we prove the following:

\begin{proposition}\label{proof:oddcases}
 Let $d\ge 7$ be an odd integer.
\begin{enumerate}[label=(\roman*)]
 \item If $d\equiv 3$ or $5\mod 8$, then the involution $\iota : U_d \to U_d$ does not preserve any connected component of $U_d \setminus \E_{\sigma_+}$.
 \item If $d\equiv 1$ or $7\mod 8$, then $\iota$ preserves $2^{\frac{d+1}{2}}$ connected components of $U_d \setminus \E_{\sigma_+}$.
\end{enumerate}
\end{proposition}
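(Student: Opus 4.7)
The plan is to apply \Cref{lemma:io} and reduce the problem to counting the $\GG_n$-orbits in $T^n$ preserved by the induced involution $\iota : T^n \to T^n$, where $n = d-1$. Since every orbit lies in a unique slice and $\iota$ permutes slices, I first isolate the $\iota$-invariant slices. For $d$ odd, $n$ is even, so for every $k \in \{1,\ldots,n\}$ exactly one of $k$ and $n+1-k$ is even, forcing $k(n+1-k) \equiv 0 \pmod 2$. By \Cref{lem:height}, $\iota S = S$ is therefore equivalent to $h_k^S = h_{n+1-k}^S$ for all $k$, i.e., to the symmetry of the height vector. The number of symmetric vectors in $\FF_2^n$ equals $2^{n/2} = 2^{(d-1)/2}$, which gives the total count of $\iota$-invariant slices.

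Within each symmetric slice $S$, the plan is to determine how many $\GG_n$-orbits are $\iota$-fixed. I would do this by invoking the orbit structure theorem \cite[Theorem 2.2]{MR1631773}, which parametrizes the $\GG_n$-orbits in a slice by explicit combinatorial invariants. The commutation identity \eqref{eqn:iota_commutes} together with \eqref{eqn:iotain} shows that $\iota$ descends to an involution on $T^n/\mathcal{I}_n$, so those invariants transform predictably under $\iota$. After choosing a reference matrix $M_S \in S$ (for instance one made symmetric under anti-transposition via \Cref{lemma:inversionformula}), the question of whether a given orbit is $\iota$-fixed should reduce to the vanishing of a quadratic $\FF_2$-valued expression in $k$ and $n$ whose parity depends only on $d \bmod 8$. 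The target is to show that the number of $\iota$-fixed orbits per symmetric slice equals $0$ when $d \equiv 3, 5 \pmod 8$ and equals $2$ when $d \equiv 1, 7 \pmod 8$.

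Multiplying this per-slice count by the $2^{(d-1)/2}$ symmetric slices will then yield $0$ invariant orbits in case (i) and $2 \cdot 2^{(d-1)/2} = 2^{(d+1)/2}$ invariant orbits in case (ii), as asserted. I expect the principal obstacle to lie in the middle step: carefully translating the anti-diagonal reflection in \eqref{eqn:ipdef} into an explicit action on the orbit labels from \cite[Theorem 2.2]{MR1631773} and then verifying the $d \bmod 8$ dichotomy. Observe that $\iota$ acts freely on $T^n$ by the last part of \Cref{lemma:io}, so every $\iota$-fixed orbit has size at least two; this is at least consistent with the dichotomy $\{0,2\}$ that the argument seeks to establish at the per-slice level.
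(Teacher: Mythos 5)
Your slice-level reduction is correct and matches the paper: since $n=d-1$ is even, the correction term $k(n+1-k)$ in \Cref{lem:height} vanishes mod $2$, so an orbit can be $\iota$-invariant only if it lies in a symmetric slice, there are $2^{(d-1)/2}$ such slices, and by \cite[Theorem 2.2(ii)]{MR1631773} each contains exactly two non-singleton $\GG_n$-orbits while the singleton orbits are ruled out by the freeness statement in \Cref{lemma:io}. However, the heart of the proposition is precisely the step you defer: deciding whether $\iota$ swaps or preserves the pair of non-singleton orbits in a symmetric slice, and showing this is governed by $d\bmod 8$. Your proposal states this as a ``target'' (``should reduce to the vanishing of a quadratic $\FF_2$-valued expression \dots whose parity depends only on $d\bmod 8$'') but supplies no argument, and nothing you write distinguishes $d\equiv 3,5$ from $d\equiv 1,7 \pmod 8$. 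As it stands, neither (i) nor (ii) is proved.

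To close the gap you need two concrete ingredients, which is how the paper proceeds. First, a reduction to a single symmetric slice: the translation action $\mathcal{I}_n\acts T^n$ acts transitively on symmetric slices, maps $\GG_n$-orbits to $\GG_n$-orbits (this is a fact from the proof of \cite[Lemma 6.7]{MR1631773}, not merely the observation that $\iota$ descends to $T^n/\mathcal{I}_n$), and commutes with $\iota$ by \eqref{eqn:iota_commutes}; hence swap/preserve is the same answer in every symmetric slice. Second, an explicit computation in the zero-height slice $\mathcal{D}_n^\perp$: one takes a concrete non-singleton representative $M_n^-$ (nontrivial entries only in the upper-right $2\times 2$ corner), sets $M_n^+=\iota(M_n^-)$, pulls both back under the isomorphism $\Phi_n^*:(T^{n-1})^*\to\mathcal{D}_n^\perp$ of \cite[Lemma 6.6]{MR1631773}, and evaluates the quadratic form $Q$ of \cite[\S 5.1, Lemma 5.1]{MR1631773}, which separates the two non-singleton orbits. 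The outcome is $Q(N^-_{n-1})=1$ while $Q(N^+_{n-1})\equiv \tfrac{\frac n2(\frac n2+1)}{2}-1 \pmod 2$, so the two values differ exactly when $n\equiv 2,4\pmod 8$, i.e.\ $d\equiv 3,5\pmod 8$ (swap, no invariant component), and agree when $n\equiv 0,6\pmod 8$, i.e.\ $d\equiv 1,7\pmod 8$ (both non-singleton orbits preserved, giving $2\cdot 2^{(d-1)/2}=2^{(d+1)/2}$ invariant components, consistent with your count). Without this computation, or a substitute for it, the mod-$8$ dichotomy is asserted rather than proved.
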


Note that there are $3\cdot 2^{d-1}$ connected components of $U_d \setminus \E_{\sigma_+}$ \cite{MR1631773}.
 
Let $d\ge 7$ be any odd integer.
Equivalently, we assume that $n = d-1 \ge 6$ is even.
Applying \Cref{lem:height}, we see that for all $k\in\{1,\dots,n\}$, $h_k^S = h^{\iota S}_{n+1-k}$.
Thus, if $S \subset T^n$ is a non-symmetric slice, then $h^{\iota S} \ne h^{S}$.
Hence, $\iota$ does not preserve any orbits lying in the non-symmetric slices.

However, for every symmetric slice $S$, 
\begin{equation}\label{eqn:symheight}
 h^S = h^{\iota S}
\end{equation}
or, equivalently,  $\iota S = S$.
Using \cite[Theorem 2.2(ii)]{MR1631773}, every symmetric slice $S$ decomposes into a number of singleton $\GG_n$-orbits and two non-singleton $\GG_n$-orbits of equal sizes.
By \Cref{lemma:io}, no singleton $\GG_n$-orbit is preserved under $\iota$.
So, it remains only to check how $\iota$ acts on the pair of non-singleton $\GG_n$-orbits in each symmetric slices.

Any symmetric slice can be sent to any other by the action $\mathcal{I}_n \acts T^n$ by translations. 
It has been noted in the proof of \cite[Lemma 6.7]{MR1631773} that $\mathcal{I}_n \acts T^n$ maps $\GG_n$-orbits to $\GG_n$-orbits.

\begin{claimone} 
 If $\iota$ swaps (resp. preserves) the pair of non-singleton $\GG_n$-orbits in one symmetric slice, then it swaps (resp. preserves) those for all symmetric slices.
\end{claimone}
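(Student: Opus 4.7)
The plan is to exploit the observation that the translation action of $\mathcal{I}_n$ on $T^n$ intertwines with $\iota$ via \eqref{eqn:iota_commutes}, together with the transitivity of $\mathcal{I}_n$ on symmetric slices and the fact (cited in the excerpt from the proof of \cite[Lemma 6.7]{MR1631773}) that translations by elements of $\mathcal{I}_n$ send $\GG_n$-orbits to $\GG_n$-orbits. These three ingredients are already in place, so the claim is a short equivariance argument.

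Fix two symmetric slices $S, S' \subset T^n$. By the transitivity recalled in the paragraph preceding the claim, pick $I \in \mathcal{I}_n$ with $S' = I + S$. Let $O_1, O_2 \subset S$ be the two non-singleton $\GG_n$-orbits, whose existence and uniqueness inside $S$ are supplied by \cite[Theorem 2.2(ii)]{MR1631773}. Since translation by $I$ is a bijection $T^n \to T^n$ sending $\GG_n$-orbits to $\GG_n$-orbits of the same cardinality, the translates $I + O_1$ and $I + O_2$ are precisely the two non-singleton $\GG_n$-orbits of $S'$; after relabeling, I write $O_i' = I + O_i$ for $i=1,2$.

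Now I apply \eqref{eqn:iota_commutes}, which asserts $\iota(I + M) = I + \iota(M)$ for every $I \in \mathcal{I}_n$ and $M \in T^n$. This yields, for $i=1,2$,
\[
\iota(O_i') \;=\; \iota(I + O_i) \;=\; I + \iota(O_i).
\]
If $\iota$ swaps $O_1$ and $O_2$, then $\iota(O_1') = I + O_2 = O_2'$ and similarly $\iota(O_2') = O_1'$, so $\iota$ swaps the pair in $S'$ too. If instead $\iota(O_i) = O_i$ for each $i$, then $\iota(O_i') = I + O_i = O_i'$, so $\iota$ preserves each orbit in $S'$. In either scenario, the behavior on $S$ transfers verbatim to $S'$, which proves the claim.

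There is essentially no serious obstacle here: all the substantive combinatorial input (the $\GG_n$-orbit description of symmetric slices, the transitive $\mathcal{I}_n$-translation action among them, and the identity \eqref{eqn:iota_commutes}) has already been assembled. The only minor point requiring care is the legitimacy of labeling $O_i' = I + O_i$, which is immediate from the bijectivity of translation. The real work still lies ahead in Claim 2, where one must determine \emph{which} of the two alternatives (swap vs.\ preserve) actually occurs, by analyzing a single conveniently chosen symmetric slice.
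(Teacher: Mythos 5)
Your proof is correct and follows essentially the same route as the paper: translate one symmetric slice to the other by some $I \in \mathcal{I}_n$, use that such translations send $\GG_n$-orbits to $\GG_n$-orbits, and apply the commutation identity \eqref{eqn:iota_commutes} to transfer the swap/preserve behavior. The only difference is cosmetic — you write out both the swap and preserve cases, whereas the paper records only the swap case and leaves the other implicit.
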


\begin{proof}
 Let $S_1$ and $S_2$ be any two symmetric slices, and let $I \in \mathcal{I}_n$ be a matrix such that $I + S_1 = S_2$.
 Let $S^\pm_1 \subset S_1$ denote the distinct non-singleton $\GG_n$-orbits.
 Then, $S^\pm_2 \coloneqq I+ S^\pm_1 \subset S_2$ are the distinct non-singleton $\GG_n$-orbits in $S_2$.
 If $\iota (S^+_1) = S_1^-$, then, by \eqref{eqn:iota_commutes},
 \[
  \iota(S^+_2) = \iota (I + S^+_1) = I + \iota (S^+_1) = I + S_1^- = S_2^-.\qedhere
 \]
\end{proof}

Thus, it is enough to understand how $\iota$ acts on the pair of non-singleton $\GG_n$-orbits in $\mathcal{D}_n^\perp$, the symmetric slice at zero height.
Consider the matrix $M^-_n\in T^n$ whose only nontrivial entries are the ones contained in the $2\times 2$ sub-matrix at the upper-right corner, and let $M^{+}_n \coloneqq \iota (M^{-}_n)$. 
We note that, $M^-_n\in \mathcal{D}^\perp_n$ and hence, so is $M_n^+$.  Using the description of the $\GG_n$-action above, it is easy to observe that the $\GG_n$-orbits $S^\pm_n \coloneqq  \GG_n \cdot M^\pm_n$ are both non-singleton.
Finally, since $M^{+}_n = \iota (M^{-}_n)$, we get $S_n^+ = \iota(S_n^-)$.

\begin{claimtwo}
For all even numbers $n\ge 6$, $S^+_n \cap  S^-_n =\emptyset$ precisely when $n\equiv 2$ or $4 \mod 8$.
\end{claimtwo}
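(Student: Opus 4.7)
The plan is to exploit \cite[Theorem 2.2]{MR1631773}, which, beyond giving the slice partition of $T^n$, supplies within each symmetric slice a mod-$2$ $\GG_n$-invariant that separates the two non-singleton orbits and is constant on each. Applied to the zero-height symmetric slice $\mathcal{D}_n^\perp$, this yields an invariant $q: \mathcal{D}_n^\perp \to \FF_2$ such that $S_n^+$ and $S_n^-$ are distinct (hence disjoint) $\GG_n$-orbits precisely when $q(M_n^+) \ne q(M_n^-)$. The statement of Claim 2 thereby reduces to pinning down this parity as a function of $n$.

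First I would make the invariant $q$ concrete using the orbit description in \cite{MR1631773}; it takes the shape of a quadratic expression $q(M) = \sum c_{\alpha,\beta}\, M_\alpha M_\beta \pmod 2$, summed over pairs of upper-triangular positions $(\alpha,\beta)$ satisfying a combinatorial condition determined by the $\GG_n$-action. Second, I would evaluate $q(M_n^-)$: since $M_n^-$ is supported only on the four corner positions $(1,n-1),(1,n),(2,n-1),(2,n)$, just finitely many summands contribute, producing an explicit small constant independent of $n$. Third, I would evaluate $q(M_n^+)$; because $M_n^+ = \iota(M_n^-)$ has a $1$ in every upper-triangular position except those four, one can write $q(M_n^+) = q(J_n) + (\text{bounded correction}) \pmod 2$, where $J_n$ is the all-ones upper-triangular matrix. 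The dominant term $q(J_n)$ then reduces to counting, modulo $2$, the number of pairs $(\alpha,\beta)$ with $c_{\alpha,\beta} = 1$, yielding a polynomial expression in $n$.

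Fourth, I would extract the period-$8$ behavior by applying Lucas's theorem to the resulting binomial-type expressions in $n$; recall that $\binom{n}{4} \pmod 2$ depends exactly on the last three binary digits of $n$, which is the natural source of an $n \bmod 8$ pattern. The comparison $q(M_n^+) - q(M_n^-)$ should then evaluate to $1$ exactly when $n \equiv 2$ or $4 \pmod 8$ and to $0$ when $n \equiv 0$ or $6 \pmod 8$, matching the claim. The main obstacle lies in this final parity bookkeeping: I must isolate those summands of $q(J_n)$ that generate the mod-$8$ term and verify that the small corner corrections and the constant $q(M_n^-)$ do not disturb the dependence. As a sanity check, direct enumeration of $\GG_n$-orbits in $\mathcal{D}_n^\perp$ is feasible for small even $n$ such as $n = 6, 8, 10, 12$, and can be used to confirm both the explicit form of $q$ and the claimed parity before committing to the general computation.
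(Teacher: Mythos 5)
Your overall strategy is the same one the paper uses: invoke the Shapiro--Shapiro--Vainshtein mod-$2$ quadratic invariant that separates the two non-singleton $\GG_n$-orbits in the zero-height slice, evaluate it on the two representatives $M_n^\pm$, and read off a parity condition that is periodic in $n$ with period $8$. So the skeleton is right. However, the proposal stops exactly at the step that constitutes the proof, and the tactical assumptions you make about how that step would go are not safe. The invariant $Q$ of \cite[\S 5.1]{MR1631773} is defined on $(T^{n-1})^*$, not on $\mathcal{D}_n^\perp\subset T^n$; it is transported to the slice only through the isomorphism $\Phi_n^*:(T^{n-1})^*\to\mathcal{D}_n^\perp$, and this identification is non-local in the direction you need. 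Concretely, while the preimage of $M_n^-$ is a single elementary matrix, the preimage of $M_n^+=\iota(M_n^-)$ is $N^+_{n-1}=N^-_{n-1}+P_{n-1}$, whose support has $\frac{\frac n2(\frac n2+1)}{2}$ entries, growing like $n^2/8$. So your claims that $q(M_n^-)$ is computed from ``finitely many summands'' with a constant independent of $n$, and that $q(M_n^+)=q(J_n)+(\text{bounded correction})$, are not justified as stated: the cross-term of the quadratic form between the all-ones matrix and the corner block involves order $n^2$ coefficients, and nothing in the proposal controls it. The period-$8$ phenomenon is also not located where you expect: in the paper it comes from the parity of the triangular number $\frac{\frac n2(\frac n2+1)}{2}$ (the number of ones in $P_{n-1}$), not from a Lucas-type analysis of $\binom{n}{4}$. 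Finally, connecting $Q$ to the orbit structure requires more than \cite[Theorem 2.2]{MR1631773}; the paper has to invoke Lemmata 4.3, 5.5 and 6.6 of that reference.

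For comparison, the paper's execution avoids all of the unverified locality claims: it writes down explicit preimages $N^\pm_{n-1}\in(T^{n-1})^*$ with $\Phi_n^*(N^\pm_{n-1})=M^\pm_n$, and then evaluates $Q$ directly via \cite[Lemma 5.1]{MR1631773}, which reduces the computation to counting rows, respectively ones, in these explicit matrices. This gives $Q(N^-_{n-1})=1$ and $Q(N^+_{n-1})\equiv\frac{\frac n2(\frac n2+1)}{2}-1\pmod 2$, so the two values differ exactly when $n\equiv 2$ or $4\pmod 8$, which is the claim. If you want to salvage your plan, the cleanest fix is precisely this: do not try to express the invariant as a quadratic form in the entries of $M\in\mathcal{D}_n^\perp$; instead compute the $\Phi_n^*$-preimages of $M_n^\pm$ explicitly and evaluate $Q$ there, where the SSV formula makes the parity count immediate. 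Your suggested computer check for $n=6,8,10,12$ is a reasonable sanity test but does not substitute for the general argument.
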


\begin{proof}
 Let $\Phi_n : (T^n)^* \to T^{n-1}$ denote the linear map given by sending a matrix $M\in (T^n)^*$ to $N\in T^{n-1}$ such that
 \[
  N_{ij} = M_{ij} + M_{i+1,\,j} + M_{i,\,j+1} + M_{i+1,\,j+1}.
 \]
 It is proven in \cite[Lemma 6.6]{MR1631773} that the dual map
 $\Phi_n^* : (T^{n-1})^* \to T^n$ maps $(T^{n-1})^*$ isomorphically onto $\mathcal{D}^\perp_n$.
 The dual map $\Phi^*$ can be computed by
\begin{equation}\label{eqn:dualphi}
 \Phi^*(E^{n-1}_{ij}) = E^n_{ij} + E^n_{i,\, j+1} + E^n_{i+1,\,j} + E^n_{i+1,\,j+1},
\end{equation}
 where $E^k_{ij}$ denotes the $k\times k$ matrix with only nontrivial entry at the position $(i,j)$, if $i \le j$, or the zero $k\times k$ matrix, otherwise.
 Let $N^-_{n-1} \in (T^{n-1})^*$ be the matrix whose only nontrivial entry is contained in the upper-right corner.
 Let $N^+_{n-1} = N^-_{n-1} + P_{n-1}$, where $P_{n-1}$ denotes the $(n-1)\times(n-1)$ matrix whose nontrivial entries are precisely located at the places $(i,j)$ such that $i\le j$, and $i$ and $j$ are both odd numbers.
 By the description of $\Phi^*$ above, it is easy to check that
  \[\Phi_n^*(N^\pm_{n-1}) = M^\pm_n.\]
 There is a quadratic function 
\begin{equation}\label{eqn:quad}
  Q : (T^{n-1})^* \to \FF_2
\end{equation}
 defined in \cite[\S 5.1]{MR1631773} which distinguishes between the pair of non-singleton orbits. 
 Applying \cite[Lemma 5.1]{MR1631773}, we get
 \[
  Q(N^-_{n-1}) = 1 \quad \text{and} \quad Q(N^{+}_{n-1}) = \frac{\frac{n}{2}(\frac{n}{2} +1)}{2} -1 \mod 2.
 \]
 Note that the quantity $\frac{\frac{n}{2}(\frac{n}{2} +1)}{2}$ counts the number of $1$'s in the matrix $P_{n-1}$.
 Therefore, $Q(N^+_{n-1}) \ne Q(N^{-}_{n-1})$ exactly in the cases when $n\equiv 2$ or $4 \mod 8$.
 Applying \cite[Lemmata 4.3, 5.5 \& 6.6]{MR1631773}, the claim follows.
\end{proof}

Now we are in a position to give:

\begin{proof}[Proof of Proposition \ref*{proof:oddcases}]
Let $n = d-1$.

(i) By the second claim above, for all even integers $n\ge 6$ satisfying $n\equiv 2$ or $4 \mod 8$, $\iota$ swaps the pair of non-singleton $\GG_n$-orbits in $\mathcal{D}^\perp_n$. 
Following the discussion before that claim, we conclude that no orbit of $\GG_n \acts T^n$ is preserved by $\iota$.

(ii) If $n\ge 6$ and $n\equiv 0$ or $6 \mod 8$, then by the above claim it follows that $\iota S^+_n = S^-_n = S^+_n$. Hence, $\iota$ preserves the non-singleton orbits of $\GG_n\acts T^n$ lying in the symmetric slices. Finally, by the first item of \cite[Theorem 2.2(ii)]{MR1631773}, there are exactly $2^{\frac{n}{2}+1}$ such orbits.
\end{proof}

\subsection{Proof of Theorem \ref*{thm:swaps} when $d \equiv 4\mod 8$ and $d\ge 12$}\label{sec:deven}

The only remaining case of \Cref{thm:swaps} is as follows:

\begin{proposition}
  Let $d\ge 12$ be an integer such that $d \equiv 4\mod 8$. 
  The involution $\iota : U_d \to U_d$ does not preserve any connected components of $U_d \setminus \E_{\sigma_+}$.
\end{proposition}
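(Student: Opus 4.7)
My plan is to adapt the combinatorial strategy of \S\ref{sec:dodd} to $n = d - 1$, which is now odd with $n \equiv 3 \pmod{8}$. First I would identify the $\iota$-invariant slices. By \Cref{lem:height}, a slice $S$ is $\iota$-invariant iff $h_k^S + h_{n+1-k}^S \equiv k(n+1-k) \pmod{2}$ for all $k$. Since $n$ is odd, $k$ and $n+1-k$ share parity, so the condition reads $h_k^S + h_{n+1-k}^S = 1$ for odd $k$ and $h_k^S = h_{n+1-k}^S$ for even $k$. The middle index $(n+1)/2$ is even (because $n \equiv 3 \pmod{4}$), so the condition is consistent and the $\iota$-invariant slices form a nonempty affine subspace of slice space; in particular, unlike in \S\ref{sec:dodd}, these slices are typically not symmetric.

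Next, I would show that $\mathcal{I}_n$ acts transitively on the $\iota$-invariant slices, adapting Claim 1 of \S\ref{sec:dodd}: one verifies that the image of $\mathcal{I}_n$ in $\FF_2^n$ under pairings with the $R_k$ spans the subspace of differences of $\iota$-invariant height vectors. Combined with \eqref{eqn:iota_commutes}, this reduces the problem to analyzing a single base $\iota$-invariant slice $S_0$, chosen explicitly (e.g., a translate of $\mathcal{D}_n^\perp$ by a matrix supported near the anti-diagonal so that $h^{S_0}$ has the prescribed shape). By \cite[Theorem 2.2]{MR1631773} applied to $S_0$, the $\GG_n$-orbits split into singletons (not $\iota$-invariant by \Cref{lemma:io}) and a collection of non-singleton orbits, so it suffices to show that $\iota$ swaps the non-singleton orbits in pairs.

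Following the template of Claim 2 in \S\ref{sec:dodd}, I would then construct explicit representatives $M^\pm \in S_0$ with $M^+ = \iota(M^-)$, each lying in a non-singleton $\GG_n$-orbit, and use the quadratic invariant $Q : (T^{n-1})^* \to \FF_2$ of \cite[\S 5.1]{MR1631773}, transferred through $\Phi_n^*$, to distinguish their $\GG_n$-orbits. By \cite[Lemma 5.1]{MR1631773}, the difference $Q(N^+) + Q(N^-)$ reduces to a mod-$2$ count of ones in a matrix analogous to $P_{n-1}$, and the hypothesis $n \equiv 3 \pmod{8}$ should force this count to be odd, so that $Q$ separates the two orbits. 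The $\mathcal{I}_n$-transitivity of the previous step then propagates this conclusion to every $\iota$-invariant slice, ruling out any $\iota$-fixed $\GG_n$-orbit.

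The main obstacle is that the invariants of \cite{MR1631773}, notably $Q$, are developed in the context of symmetric slices, whereas the $\iota$-invariant slices arising here for odd $n$ are generically non-symmetric. Bridging this gap will likely require either a direct re-derivation of $Q$ (and the corresponding orbit classification) in the $\iota$-invariant non-symmetric setting, or a linear change of coordinates that identifies our $\iota$-invariant affine space with the symmetric one while keeping track of how $\iota$ transforms. The mod-$8$ arithmetic should emerge, as in \S\ref{sec:dodd}, from the parity of a triangular-number count tied to an analogue of $\tfrac{(n/2)(n/2+1)}{2}$ adapted to the middle block $(n+1)/2$, which is precisely controlled by the residue of $n$ modulo $8$.
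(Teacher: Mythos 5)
Your setup matches the paper's: you correctly identify the $\iota$-invariant (``special'') slices via \Cref{lem:height} as the non-symmetric slices with $h_k^S+h_{n+1-k}^S\equiv k \pmod 2$, you note the middle index is consistent because $(n+1)/2$ is even, and your translation argument by $\mathcal{I}_n$ (using \eqref{eqn:iota_commutes} and the fact that the height map on $\mathcal{I}_n$ hits exactly the vectors symmetric about the middle) is exactly the paper's reduction to a single special slice. One inaccuracy worth fixing: for \emph{non-symmetric} slices, \cite[Theorem 2.2(i)]{MR1631773} says the slice consists of exactly two $\GG_n$-orbits of equal size and contains no singletons; your description (``singletons plus a collection of non-singleton orbits'') is the structure of symmetric slices. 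This matters logically, since exhibiting a single pair $M^+=\iota(M^-)$ in distinct orbits only rules out $\iota$-invariant orbits because there are precisely two orbits in the slice.

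The genuine gap is the step you yourself flag as ``the main obstacle'': how to distinguish the two $\GG_n$-orbits of a special (non-symmetric) slice, given that the quadratic invariant $Q$ of \cite[\S 5.1]{MR1631773} lives on $(T^{m})^*$ and detects the two non-singleton orbits only in the zero-height slice $\mathcal{D}^\perp_{m+1}$ via $\Phi^*_{m+1}$. You propose either re-deriving $Q$ in the non-symmetric setting or finding a change of coordinates, but supply neither. The paper's resolution is a concrete bridge you are missing: a map $f\colon T^n\to T^{n+1}$ appending a fixed column (with $1$'s in the first $(n+1)/2$ entries) to each matrix of the chosen special slice $\bar S_n$, which sends $\bar S_n$ into $\mathcal{D}^\perp_{n+1}$ and carries the two $\GG_n$-orbits into the two non-singleton $\GG_{n+1}$-orbits there; one then pulls back explicit representatives through $\Phi^*_{n+1}$ and computes $Q$ on them. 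Moreover, the final arithmetic is not a routine ``should be odd'' check: the explicit computation gives $Q(\bar N_n^-)=1$ and $Q(\bar N_n^+)=0$ precisely when $n\equiv 3\pmod 8$, while for $n\equiv 7\pmod 8$ both values vanish (\Cref{rem:8m}), which is exactly why the paper cannot conclude for $d\equiv 0\pmod 8$. Without the embedding into $T^{n+1}$ and this computation, your argument does not close.
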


\begin{proof}
 Suppose that $n = d-1\ge 11$ is an odd integer such that $n\equiv 3\mod 8$.
Applying \Cref{lem:height}, we observe that the $\GG_n$-orbits in the {\em symmetric} slices are not preserved, since, for every symmetric slice $S$, $h_1^S = 1+ h_1^{\iota S}$.
Furthermore, by a similar application of \Cref{lem:height} to the non-symmetric slices $S$, we observe that $h^S \ne h^{\iota S}$ unless $h^S$ satisfies 
\begin{align}\label{eqn:nonsym}
\begin{split}
 \text{for all even }k,\quad h^S_k &= h^{S}_{n+1-k}, \quad\text{ and}  \\
 \text{for all odd }k,\quad h^S_k &= h^{S}_{n+1-k} +1. 
\end{split}
\end{align}

Therefore, our discussion reduces to the case of $\GG_n$-orbits contained in the {\em non-symmetric} slices whose height vectors $h^S$ satisfy \eqref{eqn:nonsym}; we call such {non-symmetric} slices {\em special}. 
By definition, it follows that a slice $S$ is {special} if and only if $\iota(S) = S$.
By \cite[Theorem 2.2(i)]{MR1631773}, every {non-symmetric} (in particular, special) slice decomposes into a pair of orbits of equal sizes.

\begin{claim}
 Any special slice can be brought to any other by the action $\mathcal{I}_n \acts T^n$ by translations.
\end{claim}

\begin{proof}
 If $S$ and $S'$ are any two special slices, then the difference vector $h^S - h^{S'}$ is symmetric with respect to the middle.
 We only need to remark that the image of the map $h : \mathcal{I}_n \to \FF^n_2$ which sends a matrix $M\in \mathcal{I}_n$ to the vector $(h_1^M,\dots,h_n^M) \in \FF^n_2$, where $h_k^M \coloneqq (M,R_k) \in\FF_2$, 
 consists of all vectors $h\in\FF^n_2$ which are symmetric with respect to the middle.
\end{proof}

Since $\iota$ preserves the orbit structure of the action $\mathcal{I}_n \acts T^n$ (by \eqref{eqn:iota_commutes}), and the action $\mathcal{I}_n \acts T^n$ preserves the $\GG_n$-orbit structure of the slices, by the above claim, it is enough to understand the involution $\iota : S \to S$ on {\em only} one special slice $S$.
Let $\bar S_n$ denote the special slice at height
\[
 \bar h_n = (\underbrace{1,0,1,0,\dots,1}_{\text{first $\frac{n-1}{2}$ entries}},0,\dots,0) \in \FF^n_2.
\]
Note that $\bar h_n$ satisfies \eqref{eqn:nonsym}.
Let $\bar M^-_n \in \bar S_n \subset T^n$ denote the diagonal matrix whose diagonal entries are given by the vector $\bar h_n$, and let $\bar M^+_n \coloneqq \iota(\bar M^-_n)$. 
Let $\bar S^\pm_n \coloneqq \GG_n \cdot \bar M_n^\pm\subset \bar S_n$.

Define a map $f: T^n \to T^{n+1}$ by sending a matrix $M$ to the matrix $f(M)\in T^{n+1}$ obtained by appending the transpose of the vector
 \[
  (\underbrace{1,\dots,1}_{\text{$\frac{n+1}{2}$}},0,\dots,0) \in \FF^{n+1}_2
 \]
 to $M$ as the last column.
 By a direct calculation of the height, we observe that $f(\bar S_n) \subset \mathcal{D}^\perp_{n+1}$.
 Moreover, by definition of the $\GG_n$-action, it follows that $f(\bar S^-_n)$ (and similarly, $f(\bar S^+_n)$) is contained in a non-singleton $\GG_{n+1}$-orbit in $\mathcal{D}^\perp_{n+1}$.
Therefore, it is enough to show that $f(\bar S^-_n)$ and $f(\bar S^+_n)$ lie in two different $\GG_{n+1}$-orbits.
Recall that $Q\circ (\Phi^*_{n+1})^{-1}$, where $Q: (T^n)^* \to\FF_2^n$ is the quadratic function in \eqref{eqn:quad}, distinguishes between the pair of non-singleton $\GG_{n+1}$-orbits in $\mathcal{D}^\perp_{n+1}$.
Let $\bar N^\pm_n \in (T^n)^*$ denote the $n\times n$ matrices given by
\begin{align*}
  (\bar N^-_n)_{ij} &= 
 \begin{cases} 
      1 & i\le j,~ i \text{ is odd and }i \le \frac{n+1}{2} \\
      0 & \text{otherwise},
 \end{cases}\\
 (\bar N^+_n)_{ij} &=
 \begin{cases} 
      1 & i\le j,~ i,j \text{ are both odd and }i \le \frac{n+1}{2} \\
      1 & i\le j,~ i \text{ is odd, } j \text{ is even, and } i \ge \frac{n+1}{2} \\
      0 & \text{otherwise}.
 \end{cases}
\end{align*}
Using the description of the dual map $\Phi_n^*$ in \eqref{eqn:dualphi},
one checks that $f(\bar M^\pm_n) = \Phi_n^*(\bar N_n^\pm)$.
With the help of \cite[Lemma 5.1]{MR1631773}, we obtain that (modulo 2) the quantity $Q(\bar N_n^-)$ counts the number of nontrivial rows in $\bar N_n^-$ whereas $Q(\bar N_n^+)$ counts the number of $1$'s in $Q(\bar N_n^+)$; since $n$ is of the form $8m+3$, we get that
\[
 Q(\bar N_n^-) = 1, \quad
 Q(\bar N_n^+) = 0.
\]
This concludes the proof.
\end{proof}

\begin{remark}\label{rem:8m}
 Most of the discussion in the above proof applies to the case when $d$ is divisible by $8$, except that in this case, $Q(\bar N_{d-1}^\pm)$ are both zero.
\end{remark}

\section{Proof of Theorem \ref*{thm:main}}\label{sec:thm:main}
In this section, we prove \Cref{thm:main}.
We first need the following lemma.

\begin{lemma}\label{heisen}
Let $d$ be any natural number satisfying \eqref{eqn:d}, and let $\Omega$ be any connected component of $\C_{\sigma_-} \cap \C_{\sigma_+}$.
 Then, for every $\sigma\in \Omega$, $\sigma_+ \not\in u_\sigma \Omega$.
\end{lemma}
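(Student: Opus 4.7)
The plan is to translate the desired non-membership $\sigma_+ \notin u_\sigma \Omega$ into a statement about the involution $\iota$ acting on a connected component of $U_d \setminus \E_{\sigma_+}$, and then invoke \Cref{thm:swaps}.

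First I would unpack what $\sigma_+ \in u_\sigma \Omega$ means. Any $\sigma'\in \Omega\subset \C_{\sigma_-}$ is of the form $u_{\sigma'}\sigma_+$ for a unique $u_{\sigma'}\in U_d$, so
\[
 u_\sigma\cdot \sigma' \;=\; u_\sigma u_{\sigma'}\,\sigma_+,
\]
and this equals $\sigma_+$ exactly when $u_\sigma u_{\sigma'}$ fixes $\sigma_+$. The stabilizer of $\sigma_+$ inside $U_d$ is trivial: an upper-triangular unipotent matrix preserves the descending flag $\sigma_+$ only if it also preserves each subspace $\mathrm{span}\{e_d,\ldots,e_{d-k+1}\}$, forcing it to be lower triangular and therefore equal to $\mathbf{I}$. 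Hence $\sigma_+ \in u_\sigma \Omega$ if and only if there is $\sigma'\in \Omega$ with $u_{\sigma'} = u_\sigma^{-1} = \iota(u_\sigma)$.

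Next, under the identification $U_d\cong \C_{\sigma_-}$ via $F_{\sigma_+}$, the component $\Omega$ of $\C_{\sigma_-}\cap \C_{\sigma_+}$ corresponds to a connected component $\widetilde\Omega$ of $U_d\setminus \E_{\sigma_+}$. The previous paragraph rephrases the condition $\sigma_+ \in u_\sigma \Omega$ as $\iota(u_\sigma)\in \widetilde\Omega$. Quantifying over $\sigma\in\Omega$ (equivalently, over $u_\sigma \in \widetilde\Omega$), the conclusion of the lemma becomes
\[
 \iota(\widetilde\Omega)\cap \widetilde\Omega \;=\; \emptyset.
\]

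Finally, by \Cref{prop:jac}, $\iota$ is a diffeomorphism of $U_d\setminus \E_{\sigma_+}$, so $\iota(\widetilde\Omega)$ is itself a connected component of $U_d\setminus \E_{\sigma_+}$. Therefore $\iota(\widetilde\Omega)\cap \widetilde\Omega\neq\emptyset$ would force the equality $\iota(\widetilde\Omega)=\widetilde\Omega$, directly contradicting \Cref{thm:swaps} under the assumption on $d$. The whole argument is a short unwinding of definitions once the triviality of the $U_d$-stabilizer of $\sigma_+$ is recorded; the main ingredient, \Cref{thm:swaps}, has already been proved, so there is no substantial obstacle here.
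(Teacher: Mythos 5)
Your proof is correct and follows essentially the same route as the paper: the paper's one-line proof likewise rewrites $\sigma_+ \in u_\sigma\Omega$ as $u_\sigma^{-1}\sigma_+ = \iota(u_\sigma)\sigma_+ \in \Omega$ and appeals to \Cref{thm:swaps}, with your stabilizer observation already implicit in the simple transitivity of $U_d \acts \C_{\sigma_-}$. Your write-up just makes the translation (and the fact that $\iota$ permutes the components of $U_d\setminus\E_{\sigma_+}$, via \Cref{prop:jac}) explicit.
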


\begin{proof}
 The equivalent statement that, for every $\sigma\in\Omega$, $u_\sigma^{-1} \sigma_+ \not\in \Omega$, follows directly from \Cref{thm:swaps}.
\end{proof}

Now we prove \Cref{thm:main}.

\begin{proof}[Proof of \Cref*{thm:main}]
Suppose that $d\in \N$ is any number satisfying \eqref{eqn:d}, and let $\Omega$ be a connected component of $\F_d \setminus (\E_{\sigma_+} \cup \E_{\sigma_-})$.
Let $\sigma \in \Omega$ be any point.
 Pick a continuous path $u_t$, $t \in [0,1]$, in $U_d$ from the identity element to $u_\sigma$.
 The set $\E = \bigcup_{t\in[0,1]} u_t \E_{\sigma_+} = \bigcup_{t\in[0,1]} \E_{u_t\sigma_+}$ is a compact set not containing $\sigma_-$.
 Let $B_r(\sigma_-)$ denote the closed ball in $\F_d$ centered at $\sigma_-$ (with respect to some background metric on $\F_d$ compatible with the manifold topology) of radius $r>0$ small enough such that it does not intersect $\E$.
  We show that 
\begin{equation}\label{eqn}
   (B_{r}(\sigma_-) \cap \Omega) \subset u_\sigma \Omega.
\end{equation}
 By our choice of the radius $r$, any point $\s\in B_{r}(\sigma_-) \cap \Omega$ is antipodal to $\sigma_-$ and $u_t \sigma_+$, for all $t\in [0,1]$.
 Equivalently, for all $t\in [0,1]$, $u_t^{-1}\s$, is antipodal to $\sigma_-$ and $\sigma_+$.
 Therefore, we obtain a path $u_t^{-1}\s$, $0\le t \le 1$, from $\s$ to $u_1^{-1} \s$ which lies completely in a single connected component of $\C_{\sigma_-} \cap \C_{\sigma_+}$.
 Since, by assumption, $\s\in\Omega$, we must have $u_1^{-1} \s \in \Omega$.
 Hence, $\s\in u_1\Omega = u_\sigma \Omega$.
 
 Now we can complete the proof of the theorem.
 Let $c : [-1,1] \to \F_d$ be a continuous path such that
 \[
  c(\pm 1) = \sigma_\pm \quad \text{and} \quad
  c((-1,1)) \subset \Omega.
 \]
 Then, by \eqref{eqn}, there exists $t_0\in (-1,1)$ such that
\begin{equation}\label{eqn:in}
 c(t) \in u_\sigma\Omega,
 \quad \text{whenever } -1\le t\le t_0.
\end{equation}
 However, by \Cref{heisen}, $\sigma_+ \not\in u_\sigma\Omega$.
 Since $\sigma_+$ is antipodal to both $\sigma_-$ and $\sigma$,
 $\sigma_+ \not\in u_\sigma \bar\Omega$, where $\bar\Omega$ denotes the closure of $\Omega$ in $\F_d$.
 Therefore, there exists $t_1\in(t_0,1)$ such that
 \begin{equation}\label{eqn:nin}
 c(t) \not\in u_\sigma\Omega,
 \quad \text{whenever } t_1 \le t\le 1.
 \end{equation}
 By \eqref{eqn:in} and \eqref{eqn:nin}, $c([t_0,t_1])$
 must intersect the boundary $\partial (u_\sigma \Omega)$ of the subset $u_\sigma \Omega$ in $\F_d$.
 Note that the boundary of $u_\sigma\Omega$ is contained in $\E_{\sigma_-}\cup \E_{\sigma}$, because
 \[
  \partial (u_\sigma\Omega) = u_\sigma (\partial \Omega) \subset
  u_{\sigma} (\E_{\sigma_-} \cup \E_{\sigma_+}) = \E_{\sigma_-} \cup \E_{\sigma}.
 \]
 Furthermore,
  under our hypothesis, $c((-1,1))\cap \E_{\sigma_-} = \emptyset$,
 we must have
 \[
  c([t_0,t_1]) \cap \E_\sigma \neq \emptyset.
 \]
 This concludes the proof.
\end{proof}

\section{Proofs of Theorems \ref*{thm:main0} and  \ref*{thm:main1}}\label{proofs:mainresults}
We first prove \Cref{thm:main0}.
Proof of \Cref{thm:main1} is similar and given afterwards.

\begin{proof}[Proof of \Cref*{thm:main0}]\label{sec:mainthm}
 Suppose that $d\in \N$ is as in the hypothesis.
 By definition, since $\G$ is a $B$-boundary embedded subgroup of $\SL(d,\R)$, $\G$ is a hyperbolic group.
 Furthermore, since $\G$ is finitely-generated, appealing to the {Selberg lemma}, we know that $\G$ is virtually torsion-free.
 After passing to a subgroup of finite index, we may (and will) assume that $\G$ is torsion-free.
 Then, by the {Stallings decomposition theorem}, $\G$ is isomorphic to a free product
\begin{equation}\label{eqn:Sta}
   \G = F_k \star \G_1 \star \dots \star \G_n,
\end{equation}
 where $F_k$ is a free group of rank $k\ge 0$, and $\G_j$'s are one-ended hyperbolic groups ($n\ge 0$).
 Suppose that $\G$ is not free.
 Hence, we must have $n\ge 1$.
 We show that $k = 0$, $n = 1$, $\G_1$ is a surface group.
 
 The subgroup $\G_1$ is naturally $B$-boundary embedded in $\SL(d,\R)$:
 Let $\xi: \partial_\infty \G \to \F_d$ denote a (fixed) $\G$-equivariant antipodal embedding, and let $\xi_1$ denote the composition of the following maps,
 \[
  \partial_\infty \G_1 \hookrightarrow \partial_\infty \G \xrightarrow{\xi} \F_d.
 \]
 Then, $\xi_1 : \partial_\infty\G_1 \to \F_d$ is a $\G_1$-equivariant antipodal embedding.
 
\begin{lemma}
 If $\widehat\G$ is a one-ended hyperbolic group, then there exists a topological embedding $i : S^1 \to  \partial_\infty\widehat\G$.
\end{lemma}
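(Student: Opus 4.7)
The plan is to combine classical structural theorems about boundaries of one-ended hyperbolic groups with a standard continuum-theoretic argument. First I would record the topological properties of $\partial_\infty\widehat\G$. Since $\widehat\G$ is finitely generated and hyperbolic, $\partial_\infty\widehat\G$ is a compact metrizable space; since $\widehat\G$ is non-elementary and one-ended, this boundary is a non-degenerate continuum, i.e.\ a connected, compact metric space with more than one point. By the theorem of Bestvina--Mess, together with the Bowditch--Swarup theorem ruling out global cut points in the boundary of a one-ended hyperbolic group, $\partial_\infty\widehat\G$ is moreover \emph{locally connected} and has \emph{no cut points}.

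Next I would invoke a classical result from continuum theory: every non-degenerate, connected, locally connected compact metric space without cut points is cyclically connected and therefore contains a simple closed curve; equivalently, it admits a topological embedding of $S^1$. Applied to $\partial_\infty\widehat\G$, this immediately yields the desired embedding $i : S^1 \to \partial_\infty\widehat\G$. Alternatively, one may appeal directly to \cite[Corollary 2]{MR2146190}, which under these hypotheses produces an even stronger non-isolated topologically embedded circle in $\partial_\infty\widehat\G$; discarding the non-isolation gives exactly the statement of the lemma. (This stronger form will in fact be needed later for \Cref{thm:main1}.)

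The main obstacle is not the final topological step, which is routine once the hypotheses are in place, but rather correctly assembling the three ingredients---connectedness, local connectedness, and the absence of cut points---each of which is a nontrivial theorem in the theory of one-ended hyperbolic groups. All three are by now standard, so the argument reduces to a careful citation of the literature followed by the classical Peano-continuum construction of a simple closed curve.
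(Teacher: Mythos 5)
Your proposal is correct and takes essentially the same approach as the paper, whose own (terse) proof offers exactly the same two alternatives: a direct topological argument from local connectedness and the absence of global cut points in the boundary of a one-ended hyperbolic group, or an appeal to Bonk--Kleiner's Corollary 2. You have merely filled in the standard citations and the classical Peano-continuum step, which is fine.
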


\begin{proof}
  Such embedded circles can be constructed either by a direct topological argument by using the fact that boundaries of one-ended groups are locally connected and without any global cut points (by Swarup \cite{MR1412948}), or by using Bonk--Kleiner's \cite[Corollary 2]{MR2146190}.
\end{proof}

 Let $i : S^1 \to \partial_\infty \G_1$ be a topological embedding, and define
 \[
  c \coloneqq \xi_1 \circ i : S^1 \to \F_d.
 \]
 Then, $c$ is an antipodal map.
 Define
 \[
  \E_{c} \coloneqq \bigcup_{x\in S^1} \E_{c(x)} \subset \F_d.
 \]
 We show that $\partial_\infty \G_1 = i(S^1)$, i.e., $\partial_\infty\G_1$ is homeomorphic to a circle:
 Suppose to the contrary that $\partial_\infty \G_1 \supsetneq i(S^1)$.
 Consider a sequence of points $(y_n)$ in $\partial_\infty \G_1 \setminus i(S^1)$ which converges to some point $y\in i(S^1)$.
 Then, $\xi_1(y_n) \to \xi(y)$, as $n\to\infty$.
 Since $\xi_1$ is antipodal, $\xi_1(y_n) \not\in \E_c$.
 However, by \Cref{cor:maximal},
 the image of $c$ is contained in the interior of $\E_c$.
 Hence, we get a contradiction.
 
 Since $\partial_\infty \G_1$ is homeomorphic to a circle, $\G_1$ is isomorphic to a surface group due to the deep work by Tukia, Gabai, Freden, Casson, and Jungreis. See  Theorem 5.4 in the survey by Kapovich--Benakli \cite{MR1921706}.

 Finally, we show that $\G = \G_1$ in \eqref{eqn:Sta}:
 Suppose, to the contrary, that $\G \setminus \G_1$ is nonempty.
 Then, $\partial_\infty\G \setminus \partial_\infty \G_1$ is also nonempty (for example, the fixed points in $\partial_\infty\G$ of any element $\g\in \G \setminus \G_1$ lie outside $\partial_\infty\G_1$).
 Let $z\in  \partial_\infty\G \setminus \partial_\infty \G_1$ be an arbitrary point, and let $\g_1\in\G_1$ be a nontrivial element.
 Then, $(\g_1^n z)_{n\in\N}$ is a sequence in $\partial_\infty\G \setminus \partial_\infty \G_1$ accumulating in $\partial_\infty\G_1 \cong S^1$.
 By a similar argument as in the previous paragraph, we obtain a contradiction.
  \end{proof}
  
  Finally, we prove \Cref{thm:main1}.
  
\begin{proof}[Proof of \Cref*{thm:main1}]
If the image of $c: S^1\to \partial_\infty Z$ is not open, then
there exists a sequence $(z_n)$ in $\partial_\infty Z$ outside $c(S^1)$ converging to a point $z\in c(S^1)$. 
Suppose, to the contrary, that there exists a uniformly regular quasi-isometric embedding $f: Z\to X_d$, where $d$ satisfies \eqref{eqn:d}.
Since $Z$ is locally compact, by \cite[Theorem 1.2]{MR3890767}, $f$ admits a continuous extension
\[
 \bar f : \bar Z \to X_d \sqcup \F_d,
\]
where $\bar Z$ is the compactification of $Z$ by attaching the Gromov boundary $\partial_\infty Z$,
such that the restriction $\bar f\vert_{\partial_\infty Z}: \partial_\infty Z \to \F_d$ is an antipodal map.
As a consequence of continuity, the sequence $(\bar f(z_n))$ converges to $\bar f(z)$. However, due to the antipodality of the map $\bar f\vert_{\partial_\infty Z}$, $(\bar f(z_n))$ must remain antipodal to $\bar f(c(S^1))$.
This is a contradiction since \Cref{cor:maximal} asserts that $\bar f(c(S^1))$ is locally maximally antipodal.
\end{proof}

\section{Some further remarks}\label{sec:further}
Suppose that $d$ is any natural number satisfying \eqref{eqn:d}.
Recall that, by \Cref{cor:maximal}, antipodal circles $\Lambda$ in $\F_d$ are locally maximally antipodal.
The following result shows that, if such a circle $\Lambda$ is the limit set of some Borel Anosov subgroup of $\SL(d,\R)$, then $\Lambda$ is maximally antipodal; i.e.,
\begin{equation*}
   \bigcup_{\sigma\in\Lambda} \E_\sigma = \F_d.
\end{equation*}

\begin{proposition}\label{prop:remone}
Let $d$ be any natural number satisfying \eqref{eqn:d}.
 If $\G< \SL(d,\R)$ is a Borel Anosov subgroup, which is isomorphic to a surface group, then its {flag limit set} $\Lambda$ is a maximally antipodal subset of $\F_d$.
\end{proposition}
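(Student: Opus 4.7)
The strategy is to argue by contradiction, leveraging \Cref{cor:maximal} together with the contracting dynamics of Borel Anosov subgroups.

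Since $\G$ is Borel Anosov and isomorphic to a surface group, $\partial_\infty \G \cong S^1$ and the limit map $\xi : \partial_\infty \G \to \F_d$ is a $\G$-equivariant antipodal embedding. Hence $\Lambda = \xi(\partial_\infty \G)$ is precisely the image of an antipodal embedding of $S^1$ into $\F_d$, so \Cref{cor:maximal} applies and furnishes an open neighborhood $N \supset \Lambda$ with the property that every point of $N$ fails to be antipodal to some point of $\Lambda$. This is the only place where the restriction \eqref{eqn:d} on $d$ enters the argument.

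Next, I would suppose for contradiction that $\Lambda$ is not maximally antipodal, i.e.\ there exists some $\tau \in \F_d \setminus \Lambda$ antipodal to every point of $\Lambda$. Because $\Lambda$ is $\G$-invariant and antipodality is invariant under $\SL(d,\R)$, every $\G$-translate $\g\tau$ is also antipodal to all of $\Lambda$. In particular, by the defining property of $N$, $\g\tau \notin N$ for every $\g \in \G$. The plan is to contradict this by flowing $\tau$ into $N$ using iteration of a suitably chosen infinite-order element of $\G$.

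Concretely, pick any infinite-order $\g \in \G$ with attracting and repelling fixed points $x_\pm \in \partial_\infty \G$, so that $\xi(x_\pm) \in \Lambda$ are the corresponding attracting/repelling fixed flags for the action of $\g$ on $\F_d$. The key dynamical input I would invoke is the standard contraction property of Borel Anosov subgroups: by \eqref{eqn:results_anosov} and the Cartan decomposition, $\g^n$ converges to the constant map $\xi(x_+)$, uniformly on compact subsets of the big cell $\C_{\xi(x_-)} = \F_d \setminus \E_{\xi(x_-)}$. Since $\tau$ is antipodal to $\xi(x_-) \in \Lambda$, we have $\tau \in \C_{\xi(x_-)}$, hence $\g^n \tau \to \xi(x_+) \in \Lambda$. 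Since $N$ is open and contains $\xi(x_+)$, this forces $\g^n \tau \in N$ for all sufficiently large $n$, contradicting the conclusion of the preceding paragraph.

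The main obstacle is verifying the dynamical contraction input: that $\g^n$ converges uniformly on all of the big cell $\C_{\xi(x_-)}$ (not merely pointwise on generic points) to $\xi(x_+)$. This is standard in the Anosov literature---it follows from the uniform singular-value gap \eqref{eqn:results_anosov} combined with the identification of $\xi(x_\pm)$ as the KAK-attractor/repeller flags of $\g$---and will be invoked as a black box rather than reproved.
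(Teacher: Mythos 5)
Your proposal is correct and follows essentially the same route as the paper: assume a point antipodal to all of $\Lambda$ exists, iterate a hyperbolic element $\g\in\G$ so that the orbit of that point converges to the attracting flag $\xi(x_+)\in\Lambda$ while remaining antipodal to $\Lambda$ by equivariance, and contradict the local maximal antipodality furnished by \Cref{cor:maximal}. Your only addition is to spell out why the point lies in the big cell $\C_{\xi(x_-)}$ and to cite the standard contraction dynamics explicitly, which the paper leaves implicit.
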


\begin{proof}
 Suppose, to the contrary, that there exists a point $\hat\sigma\in\F_d$ antipodal to every point in $\Lambda$.
Let $\g\in\G$ be any hyperbolic element with {\em attracting/repelling} points $\sigma_\pm \in \Lambda$. 
Then, $\gamma^k \hat\sigma \to \sigma_+$, as $k\to \infty$.
However, since $\gamma$ preserves $\Lambda$, $\gamma^k \hat\sigma$ remains antipodal to $\Lambda$, for all $k\in\N$.
Since $\Lambda$ is homeomorphic to a circle, by \Cref{cor:maximal}, $\Lambda$ is locally maximally antipodal in $\F_d$, and so we get a contradiction with the preceding two sentences. 
\end{proof}

We prove the following statement, answering a question asked by Hee Oh, which was motivated by Theorem 5.2 in Oh--Edwards \cite{Edwards:2022vr}, where the authors mention knowing the result for $d=3$ or when $d$ is even (see Remark 5.4(4) in that paper).

\begin{proposition}\label{prop:oh}
Let $d\ge 2$ be any natural number.
 The image in $\F_d$ of the equivariant limit maps corresponding to the Hitchin representations of surface groups into $\PSL(d,\R)$, $d\ge 2$, are maximally antipodal subsets.
\end{proposition}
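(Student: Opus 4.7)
The strategy is the same as for \Cref{prop:remone}: proceed by contradiction using the loxodromic dynamics of the representation, and reduce everything to a local maximal antipodality property of $\Lambda$. The essential new ingredient is that, since the statement is claimed for all $d\ge 2$, one cannot invoke \Cref{cor:maximal}; Labourie's Frenet (i.e., hyperconvex) property of Hitchin limit maps must take its place.

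Let $\rho:\pi_1(\SS)\to\PSL(d,\R)$ be a Hitchin representation with equivariant limit map $\xi:S^1\to\F_d$ and flag limit set $\Lambda=\xi(S^1)$. Labourie's theorem ensures that $\xi$ is a Frenet curve: for every collection of distinct points $x_1,\dots,x_k\in S^1$ and positive integers $n_1,\dots,n_k$ satisfying $\sum_i n_i\le d$, the subspaces $\xi^{(n_i)}(x_i)$ are in direct sum inside $\R^d$. Suppose, for contradiction, that some $\hat\sigma\in\F_d$ is antipodal to every flag in $\Lambda$. Pick any nontrivial $\g\in\pi_1(\SS)$; then $\rho(\g)$ is loxodromic with distinct real eigenvalues and with attracting/repelling flags $\sigma_\pm\in\Lambda$. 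Since $\hat\sigma$ is antipodal to $\sigma_-$, the contraction dynamics force $\rho(\g)^n\hat\sigma\to\sigma_+$ in $\F_d$, while the invariance of $\Lambda$ under $\rho(\g)$ ensures that each iterate $\rho(\g)^n\hat\sigma$ remains antipodal to all of $\Lambda$.

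The crucial step is a local maximal antipodality assertion for $\Lambda$: there is an open neighborhood $N$ of $\Lambda$ in $\F_d$ such that every flag in $N$ fails to be antipodal to some flag in $\Lambda$. Granted this, $\rho(\g)^n\hat\sigma\in N$ for $n$ large, so $\rho(\g)^n\hat\sigma$ is non-antipodal to some flag of $\Lambda$ --- contradicting the conclusion of the previous paragraph. For $d$ satisfying \eqref{eqn:d}, local maximal antipodality is immediate from \Cref{cor:maximal}. For the remaining $d$, I would derive it from the Frenet condition by fixing $\sigma_0=\xi(t_0)$, working in a Bruhat chart near $\sigma_0$, and showing that the codimension-one loci $\E_{\xi(t)}$, as $t$ ranges over a small interval around $t_0$, sweep out a neighborhood of $\sigma_0$. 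Using the expansions of the minors $p_k$ from \S\ref{sec:heisen} in the curve parameter $t-t_0$, the direct-sum Frenet property should force the relevant leading coefficients to be nonzero, from which the covering claim follows by an implicit function / dimension argument.

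The main obstacle is executing this sweeping step rigorously: one must analyze how the family of Schubert divisors $\E_{\xi(t)}$ varies along a Frenet curve, a question that \Cref{thm:swaps} does not settle outside of the range \eqref{eqn:d}. A viable alternative I would also explore is to first verify maximal antipodality for the Veronese curve (the Fuchsian base-point of the Hitchin component) via the classical projective geometry of the rational normal curve and its osculating flags, and then extend the conclusion along the connected Hitchin component by a continuity/compactness argument, exploiting that the Frenet condition is open and that antipodality of a single flag with a compact set of flags is an open condition.
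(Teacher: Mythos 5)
Your overall framework coincides with the paper's: reduce, via the loxodromic dynamics of any $\rho(\g)$, to showing that $\Lambda$ is \emph{locally} maximally antipodal, and for $d$ satisfying \eqref{eqn:d} quote \Cref{cor:maximal}. But the actual content of the proposition is the remaining dimensions, and there your argument is only a plan: the proposed ``sweeping'' of the Schubert divisors $\E_{\xi(t)}$ along a Frenet curve is precisely the step you acknowledge you cannot execute, and nothing in the proposal replaces \Cref{thm:swaps} outside the range \eqref{eqn:d}. The idea you are missing is to use Fock--Goncharov positivity instead of the Frenet property in the analytic form you suggest: for a positive triple $(\sigma_-,\sigma,\sigma_+)=(\xi(x_-),\xi(x),\xi(x_+))$ one has $\sigma=u\sigma_+$ with $u\in U_d$ totally positive, so in the parametrization \eqref{eqn:factorization}--\eqref{eqn:Mu} all the coefficients $t_{ij}$ are positive and $\sigma$ lies in the connected component $\Omega^+_d$ of $\C_{\sigma_-}\cap\C_{\sigma_+}$ corresponding to $\mathbf{0}\in T^{d-1}$. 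Since $\mathbf{0}$ is a $\GG_{d-1}$-fixed point and, by \Cref{lemma:io}, the involution $\iota$ has no fixed points in $T^{d-1}$, $\iota$ cannot preserve $\Omega^+_d$ in \emph{any} dimension; hence \Cref{heisen}, and with it the conclusion of \Cref{thm:main}, holds for this particular component for every $d\ge 2$, which is exactly what is needed to run the argument of \Cref{cor:maximal} for the positive curve $\xi$ and conclude local maximal antipodality, after which your dynamical upgrade to global maximality goes through.

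Your fallback route --- proving the Veronese case by classical projective geometry and then ``extending along the Hitchin component by continuity'' --- also has a gap: maximal antipodality is the statement that the open set $\F_d\setminus\bigcup_{\sigma\in\Lambda}\E_\sigma$ is empty, and emptiness of such a set is not stable under deformation of $\Lambda$ in any evident way (nor is the neighborhood $N$ occurring in local maximal antipodality uniform along the component), so openness of the Frenet condition and of antipodality against a compact set does not by itself propagate the conclusion from the Fuchsian locus to the whole Hitchin component. In short, your reduction to local maximal antipodality is correct and agrees with the paper, but the decisive step for general $d$ --- identifying the totally positive component and exploiting the fixed-point-freeness of $\iota$ on $T^{d-1}$ --- is absent, so the proposal does not prove the statement for the dimensions where it is new.
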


\begin{proof}
 By \Cref{prop:remone}, this result is true for all $d$ covered under the hypothesis of \Cref{prop:remone}.
 
In any case, for all $d\ge 2$, it is enough to verify that $\Lambda$ is locally maximally antipodal in $\F_d$ (cf. the proof of  \Cref{prop:remone}):
By Fock--Goncharov \cite{MR2233852}, the Hitchin representations are characterized by $\G$-equivariant, {\em positive} limit maps $\xi: \partial_\infty \G \to \F_d$.
Let $x_-,x,x_+ \in \partial_\infty\G$ be any distinct points, and let $\sigma_\pm \coloneqq \xi(x_\pm)$ and $\sigma \coloneqq \xi(x)$.
Then, the configuration of flags $(\sigma_-,\sigma,\sigma_+)$ in $\F_d$ is {\em positive}, i.e., with an appropriate identification of $U_d$ with the unipotent radical in the stabilizer of $\sigma_-$ in ${\rm PSL}(d,\R)$, there exists a {\em totally positive} matrix $u\in U_d$ such that $\sigma = u \sigma_+$.
Such a matrix $u$ corresponds to the zero matrix $\mathbf{0} \in T^{d-1}(\FF_2)$, cf. \eqref{eqn:factorization} and \eqref{eqn:Mu}.
By \Cref{lemma:io}, the involution $\iota$ does not preserve the connected component $\Omega^+_d$ of $\C_{\sigma_-}\cap \C_{\sigma_+}$ corresponding to $\mathbf{0}$, since $\mathbf{0}$ is a $\GG_{d-1}$-fixed point for the action $\GG_{d-1} \acts T^{d-1}$; see \S\ref{sec:heisen} for these notions.
Therefore, for all $d\ge 2$, \Cref{heisen}, and hence \Cref{thm:main}, holds for the specific component $\Omega^+_d$.
Following the proof of \Cref{cor:maximal}, one verifies that $\Lambda$ is a locally maximally antipodal subset of $\F_d$.
\end{proof}

\begin{table}
\footnotesize
\centering
\begin{tabular}{ | c | c | } 
\hline
{\normalsize $\Omega$} & {\normalsize ${\iota\Omega}$}\\
\hline\hline\hline
$\begin{bmatrix}
1 & 1 & 1\\
 & 1 & 1\\
 & & 1
\end{bmatrix}$
& 
$\begin{bmatrix}
0 & 0 & 0\\
 & 0 & 0\\
 & & 0
\end{bmatrix}$
\\ \hline\hline

$\begin{bmatrix}
1 & 1 & 0\\
 & 1 & 1\\
 & & 1
\end{bmatrix}$
& 
$\begin{bmatrix}
0 & 0 & 1\\
 & 0 & 0\\
 & & 0
\end{bmatrix}$
\\ \hline\hline

$\begin{bmatrix}
1 & 0 & 0\\
 & 1 & 0\\
 & & 1
\end{bmatrix}$
& 
$\begin{bmatrix}
0 & 1 & 1\\
 & 0 & 1\\
 & & 0
\end{bmatrix}$
\\ \hline\hline

$\begin{bmatrix}
1 & 0 & 1\\
 & 1 & 0\\
 & & 1
\end{bmatrix}$
& 
$\begin{bmatrix}
0 & 1 & 0\\
 & 0 & 1\\
 & & 0
\end{bmatrix}$
\\ \hline\hline

$\begin{bmatrix}
1 & 0 & 0\\
 & 1 & 1\\
 & & 1
\end{bmatrix}$
$\begin{bmatrix}
1 & 1 & 1\\
 & 0 & 0\\
 & & 1
\end{bmatrix}$
$\begin{bmatrix}
1 & 1 & 1\\
 & 1 & 1\\
 & & 0
\end{bmatrix}$
$\begin{bmatrix}
0 & 0 & 1\\
 & 1 & 0\\
 & & 1
\end{bmatrix}$
& 
$\begin{bmatrix}
0 & 0 & 1\\
 & 0 & 1\\
 & & 0
\end{bmatrix}$
$\begin{bmatrix}
0 & 1 & 0\\
 & 1 & 0\\
 & & 0
\end{bmatrix}$
$\begin{bmatrix}
1 & 0 & 0\\
 & 0 & 0\\
 & & 0
\end{bmatrix}$
$\begin{bmatrix}
0 & 1 & 0\\
 & 0 & 1\\
 & & 1
\end{bmatrix}$
\\ \hline\hline

$\begin{bmatrix}
0 & 1 & 1\\
 & 0 & 0\\
 & & 0
\end{bmatrix}$
$\begin{bmatrix}
0 & 0 & 0\\
 & 1 & 1\\
 & & 0
\end{bmatrix}$
$\begin{bmatrix}
0 & 0 & 0\\
 & 0 & 0\\
 & & 1
\end{bmatrix}$
$\begin{bmatrix}
1 & 1 & 0\\
 & 0 & 1\\
 & & 0
\end{bmatrix}$
& 
$\begin{bmatrix}
1 & 1 & 0\\
 & 1 & 0\\
 & & 1
\end{bmatrix}$
$\begin{bmatrix}
1 & 0 & 1\\
 & 0 & 1\\
 & & 1
\end{bmatrix}$
$\begin{bmatrix}
0 & 1 & 1\\
 & 1 & 1\\
 & & 1
\end{bmatrix}$
$\begin{bmatrix}
1 & 0 & 1\\
 & 1 & 0\\
 & & 0
\end{bmatrix}$
\\ \hline\hline

$\begin{bmatrix}
1 & 0 & 1\\
 & 1 & 1\\
 & & 1
\end{bmatrix}$
$\begin{bmatrix}
1 & 1 & 0\\
 & 0 & 0\\
 & & 1
\end{bmatrix}$
$\begin{bmatrix}
1 & 1 & 0\\
 & 1 & 1\\
 & & 0
\end{bmatrix}$
$\begin{bmatrix}
0 & 0 & 0\\
 & 1 & 0\\
 & & 1
\end{bmatrix}$
& 
$\begin{bmatrix}
0 & 0 & 0\\
 & 0 & 1\\
 & & 0
\end{bmatrix}$
$\begin{bmatrix}
0 & 1 & 1\\
 & 1 & 0\\
 & & 0
\end{bmatrix}$
$\begin{bmatrix}
1 & 0 & 1\\
 & 0 & 0\\
 & & 0
\end{bmatrix}$
$\begin{bmatrix}
0 & 1 & 1\\
 & 0 & 1\\
 & & 1
\end{bmatrix}$
\\ \hline\hline

$\begin{bmatrix}
0 & 1 & 0\\
 & 0 & 0\\
 & & 0
\end{bmatrix}$
$\begin{bmatrix}
0 & 0 & 1\\
 & 1 & 1\\
 & & 0
\end{bmatrix}$
$\begin{bmatrix}
0 & 0 & 1\\
 & 0 & 0\\
 & & 1
\end{bmatrix}$
$\begin{bmatrix}
1 & 1 & 1\\
 & 0 & 1\\
 & & 0
\end{bmatrix}$
& 
$\begin{bmatrix}
1 & 1 & 1\\
 & 1 & 0\\
 & & 1
\end{bmatrix}$
$\begin{bmatrix}
1 & 0 & 0\\
 & 0 & 1\\
 & & 1
\end{bmatrix}$
$\begin{bmatrix}
0 & 1 & 0\\
 & 1 & 1\\
 & & 1
\end{bmatrix}$
$\begin{bmatrix}
1 & 0 & 0\\
 & 1 & 0\\
 & & 0
\end{bmatrix}$
\\ \hline\hline

$\begin{bmatrix}
1 & 1 & 0\\
 & 0 & 0\\
 &  & 0
\end{bmatrix}$
$\begin{bmatrix}
0 & 0 & 0\\
 & 1 & 0\\
 &  & 0
\end{bmatrix}$
$\begin{bmatrix}
0 & 0 & 0\\
 & 0 & 1\\
 &  & 1
\end{bmatrix}$
&
$\begin{bmatrix}
1 & 1 & 1\\
 & 1 & 0\\
 &  & 0
\end{bmatrix}$
$\begin{bmatrix}
1 & 1 & 1\\
 & 0 & 1\\
 &  & 1
\end{bmatrix}$
$\begin{bmatrix}
0 & 0 & 1\\
 & 1 & 1\\
 &  & 1
\end{bmatrix}$
\\ 
$\begin{bmatrix}
0 & 1 & 1\\
 & 1 & 0\\
 &  & 1
\end{bmatrix}$
$\begin{bmatrix}
1 & 0 & 1\\
 & 1 & 1\\
 &  & 0
\end{bmatrix}$
$\begin{bmatrix}
1 & 0 & 1\\
 & 0 & 0\\
 &  & 1
\end{bmatrix}$
&
$\begin{bmatrix}
0 & 1 & 0\\
 & 0 & 0\\
 &  & 1
\end{bmatrix}$
$\begin{bmatrix}
1 & 0 & 0\\
 & 0 & 1\\
 &  & 0
\end{bmatrix}$
$\begin{bmatrix}
0 & 1 & 0\\
 & 1 & 1\\
 &  & 0
\end{bmatrix}$
\\ \hline\hline

$\begin{bmatrix}
1 & 1 & 1\\
 & 0 & 0\\
 &  & 0
\end{bmatrix}$
$\begin{bmatrix}
1 & 0 & 0\\
 & 1 & 1\\
 &  & 0
\end{bmatrix}$
$\begin{bmatrix}
0 & 0 & 1\\
 & 1 & 0\\
 &  & 0
\end{bmatrix}$
&
$\begin{bmatrix}
1 & 1 & 0\\
 & 1 & 0\\
 &  & 0
\end{bmatrix}$
$\begin{bmatrix}
1& 0 & 1\\
 & 0 & 1\\
 &  & 0
\end{bmatrix}$
$\begin{bmatrix}
1 & 1 & 0\\
 & 0 & 1\\
 &  & 1
\end{bmatrix}$
\\ 
$\begin{bmatrix}
1 & 0 & 0\\
 & 0 & 0\\
 &  & 1
\end{bmatrix}$
$\begin{bmatrix}
0 & 1 & 0\\
 & 1 & 0\\
 &  & 1
\end{bmatrix}$
$\begin{bmatrix}
0 & 0 & 1\\
 & 0 & 1\\
 &  & 1
\end{bmatrix}$
&
$\begin{bmatrix}
0 & 1 & 1\\
 & 1 & 1\\
 &  & 0
\end{bmatrix}$
$\begin{bmatrix}
0 & 1 & 1\\
 & 0 & 0\\
 &  & 1
\end{bmatrix}$
$\begin{bmatrix}
0 & 0 & 0\\
 & 1 & 1\\
 &  & 1
\end{bmatrix}$
\\ \hline
\end{tabular}
\caption{The table lists all the $\GG_3$-orbits (see \S\ref{sec:d4}). 
Each table entry represents a single $\GG_3$-orbit, which a collection of upper-triangular matrices comprising only 0s or 1s.}
\label{table}
\end{table}


\end{document}